\numberwithin{equation}{section}
\newtheorem{theorem}{Theorem}[section]
\newtheorem{corollary}[theorem]{Corollary}
\newtheorem{lemma}[theorem]{Lemma}
\newtheorem{proposition}[theorem]{Proposition}
\newtheorem{definition}[theorem]{Definition}
\newtheorem{remark}[theorem]{Remark}
\newcommand{\ppi}{{\mbox{\boldmath$\pi$}}}
\newcommand{\ggamma}{{\mbox{\boldmath$\gamma$}}}
\newcommand{\sggamma}{{\mbox{\scriptsize\boldmath$\gamma$}}}
\newcommand{\diam}{\mathop{\rm diam}\nolimits}
\newcommand{\supp}{\mathop{\rm supp}\nolimits}
\renewcommand{\d}{{\mathrm d}}
\newcommand{\R}{\mathbb{R}}
\newcommand{\mm}{\mathfrak m}
\newcommand{\nn}{\mathfrak n}
\newcommand{\sfd}{{\sf d}}
\newcommand{\pro}{\mathscr P}
\newcommand{\prob}[1]{\mathscr P(#1)}
\newcommand{\probt}[1]{\mathscr P_2(#1)}
\newcommand{\geo}{{\rm{Geo}}}
\newcommand{\e}{{\rm{e}}}
\newcommand{\gopt}{{\rm{OptGeo}}}
\newcommand{\fe}{f_\varepsilon}
\newcommand{\ue}{u_\varepsilon}
\newcommand{\CD}{{\sf CD}}
\newcommand{\RCD}{{\sf RCD}}
\newcommand{\BE}{{\sf BE}}
\newcommand{\Ch}{{\sf Ch}}
\renewcommand{\H}{{\sf H}}
\newcommand{\M}{{\mathbb M}}
\newcommand{\LIP}{{\sf LIP}}
\newcommand{\calW}{{\mathcal S}^2}
\title{Li-Yau and Harnack type inequalities in $\RCD^*(K,N)$ metric measure spaces}
\begin{document}
\author{Nicola Garofalo
\thanks{First author supported in part by NSF Grant DMS-1001317. \emph{Address}: Dipartimento di Ingegneria Civile, Edile e Ambientale (DICEA),  Universit\`a di Padova, 35131 Padova, Italy. \emph{Email}: nicola.garofalo@unipd.it}
      \and
   Andrea Mondino
   \thanks{Second author supported by an ETH fellowship. Part of this paper was written when he was supported by the ERC grant GeMeTheNES directed by Prof. Luigi Ambrosio. \emph{Address}: Mathematics Department, ETH, Zurich. \emph{Email}:andrea.mondino@math.ethz.ch}}





\maketitle

\begin{abstract}
Metric measure spaces satisfying the reduced curvature-dimension condition $\CD^*(K,N)$ and where the heat flow is linear are called $\RCD^*(K,N)$-spaces. This class of non smooth spaces contains Gromov-Hausdorff limits of Riemannian manifolds with Ricci curvature bounded below by $K$ and dimension bounded above by $N$. We prove that in $\RCD^*(K,N)$-spaces the following properties of the heat flow hold true: a Li-Yau type inequality, a Bakry-Qian inequality,  the Harnack inequality. 
\end{abstract}

\textit{Keywords:} metric geometry, metric analysis, heat flow,  Ricci curvature.


\tableofcontents

\section{Introduction}
Given a smooth $N$-dimensional Riemannian manifold with nonnegative Ricci curvature, a  celebrated
inequality of Li and Yau \cite{Li-Yau86} states that, for every smooth nonnegative function $f$,  one has
\begin{equation}\label{eq:ClassicLiYau}
\Delta(\log \H_t f)\geq \frac{N}{2t},
\end{equation}
where $\H_t=e^{t\Delta}$ indicates the heat semigroup associated to the Laplace-Beltrami operator $\Delta$ (strictly speaking, Li and Yau proved a stronger result, since \eqref{eq:ClassicLiYau} is valid for all positive solutions of the heat equation, not just for $u = \H_t f$). One of the main objectives of this paper is to establish Li-Yau type inequalities in non smooth spaces with Ricci curvature lower bounds. Let us briefly introduce the framework. 

Throughout the paper $(X,\sfd,\mm)$ indicates a metric measure space, m.m.s. for short; i.e., $(X,\sfd)$ is a complete and separable metric space (possibly non compact) and $\mm$ is a probability measure on it (in the setting of smooth Riemannian manifolds $\mm$ corresponds to the volume measure multiplicated by a suitable Gaussian and $\sfd$ is the usual Riemannian distance). 

In this framework, using tools of optimal transportation, Lott-Villani \cite{Lott-Villani09} and Sturm \cite{Sturm06I}-\cite{Sturm06II} detected the class of the so called $\CD(K,N)$-spaces having Ricci curvature bounded below by $K\in \R$ and dimension bounded above by $N\in[1,\infty]$; this notion is compatible with the classical one in the smooth setting (i.e., a Riemannian manifold has dimension less or equal to $N$ and Ricci curvature greater or equal to $K$ if and only if it is a $\CD(K,N)$-space), it  is stable  under measured Gromov-Hausdorff convergence, and it implies fundamental properties as the Bishop-Gromov volume growth, Bonnet-Myers diameter bound, the Lichnerowicz spectral gap, the Brunn-Minkowski inequality, etc.
\\On the other hand, some basic properties like the local-to-global and the tensorization are not clear for the $\CD(K,N)$ condition. In order to remedy to this inconvenient, Bacher-Sturm \cite{BS2010} introduced a (a priori) weaker notion of curvature called \emph{reduced curvature condition}, and denoted with $\CD^*(K,N)$, which satisfies the aforementioned missing properties and share  the same nice geometric features of $\CD(K,N)$ (but some of the inequalities may not have the optimal constant). For  more details about curvature conditions see Subsection \ref{Subsec:CB}. 

As a matter of facts,  both the $\CD(K,N)$ and $\CD^*(K,N)$ conditions include Finsler geometries \cite{Ohta}-\cite{Villani09}. In order to isolate the Riemannian-like structures, Ambrosio-Gigli-Savar\'e \cite{Ambrosio-Gigli-Savare11b}  (see also \cite{AmbrosioGigliMondinoRajala} for a simplification of the axiomatization and the extension to $\sigma$-finite measures) introduced the class of $\RCD(K,\infty)$-spaces. Such notion strengthens the $\CD(K,\infty)$ condition with the linearity of the heat flow (notice that on a smooth Finsler manifold, the $\RCD(K,\infty)$ property is equivalent to saying that the manifold is, in fact, Riemannian); as proved in \cite{GigliMondinoSavare}, the $\RCD(K,\infty)$ condition is also stable under measured Gromov-Hausdorff convergence. Next, we briefly recall the definition of heat flow in m.m. spaces.
  
\

First of all on a m.m.s. $(X,\sfd,\mm)$ we cannot speak of differential (or gradient) of a function $f$ but at least the modulus of the differential is $\mm$-a.e. well defined, it is called weak upper differential and it is denoted with $|Df|_w$ (see Subsection \ref{Subsec:Calculus} for more details). With this object one defines  the Cheeger energy of a measurable function $f:X \to \R$ as
\begin{equation}\label{CE}
\Ch(f)= \begin{cases}
\frac 1 2 \int_X |D f|_w^2 \, \d \mm,\ \ \ \text{if}\ |D f|_w \in L^2(X,\mm),
\\
+\infty,\ \ \ \ \text{otherwise}.
\end{cases}
\end{equation}
 Since $\Ch$ is convex and lowersemicontinuous on $L^2(X,\mm)$, one can apply the classical theory of gradient flows of  convex functionals in Hilbert spaces \cite{Ambrosio-Gigli-Savare08} and define the heat flow $\H_t$ as the unique $L^2$-gradient flow of $\Ch$. The infinitesimal generator of this semigroup is called Laplacian and it is denoted with $\Delta$. Let us remark that in general $\Delta$ is \emph{not} a linear operator, and it is linear if and only if the heat flow $\H_t$ is linear.

\

In order to keep track of all the three conditions (lower bound on the Ricci curvature, finite upper bound on the dimension, and infinitesimal Riemannian-like behavior) Erbar, Kuwada and Sturm \cite{Erbar-Kuwada-Sturm13} and (slightly later, with different techniques) Ambrosio, Savar\'e and the second named author \cite{AMS}, introduced the class  $\RCD^*(K,N)$. Such class consists of those  m.m. spaces which satisfy the $\CD^*(K,N)$ condition and have linear heat flow. Also the $\RCD^*(K,N)$ condition is stable under measured Gromov-Hausdorff convergence, so that limit spaces of Riemannian manifolds with Ricci curvature bounded below by $K$ and dimension bounded above by $N$ are $\RCD^*(K,N)$-spaces. One of the main achievements of both groups of authors is that the $\RCD^*(K,N)$ condition is equivalent to the dimensional Bochner inequality
\begin{equation}\label{eq:DBI}
\Delta\frac{|\nabla f|^2}{2}\geq\frac{(\Delta f)^2}{N}+\nabla f\cdot\nabla \Delta f+K|\nabla f|^2,
\end{equation}
properly understood in a weak sense. Let us remark that a very useful property of the Bochner inequality proved by Savar\'e \cite{Savare2013} is that it self-improves (for more details see Subsection  \ref{Subsec:CB}).

$\RCD^*(K,N)$-spaces satisfy nice \emph{geometric} properties as the Cheeger-Gromoll splitting Theorem \cite{Gigli13}, the Laplacian comparison Theorem \cite{Gigli12}, the Abresh-Gromoll inequality \cite{GigliMosconi}; moreover, the local blow up for $\mm$-a.e. point is Euclidean \cite{GMR13} (more precisely, the space of local blow ups in a point contains a Euclidean space) and the geodesics are essentially non branching \cite{RajalaSturm}.

\

The main objective of this paper is instead  to  investigate special \emph{analytic} properties of $\RCD^*(K,N)$-spaces. More precisely, we wish to  prove estimates on the heat flow involving the lower bound $K$ on the curvature and the upper bound $N$ on the dimension. Our strategy is to use the dimensional Bochner inequality \eqref{eq:DBI} in combination with the $\Gamma$-calculus developed by  Bakry-Emery \cite{BE83} and Bakry-Ledoux \cite{BL2006} in the smooth setting. We have been inspired by the paper \cite{BG2011} of Baudoin and the first named author in which, in the Riemannian setting, a purely analytical approach to the Li-Yau program is provided. Such approach is flexible enough that can be adapted to the setting of m. m. spaces. A key role is also played by the fundamental papers of Ambrosio-Gigli-Savar\'e \cite{Ambrosio-Gigli-Savare11}-\cite{Ambrosio-Gigli-Savare11b}-\cite{AmbrosioGigliSavare12}-\cite{AGSBaEm}, where the calculus and the fine properties of the heat flow in m.m. spaces are investigated.

Before stating the main theorems let us recall that  $\probt X$ denotes the class of probability measures with finite second moment on $(X,\sfd)$; moreover, given a nonnegative Borel measure $\nn$ on $X$ and a measurable function $f:X\to \R$, $|Df|_{w,\nn}$ denotes the weak upper differential of $f$ with respect to $\nn$ (see Subsection \ref{Subsec:Calculus} for more details).   

Our first main result is the following generalization of the Li-Yau inequality.

\begin{theorem}[Li-Yau inequality]\label{thm:Li-Yau}
Let $(X,\sfd,\mm)$ be a $\RCD^*(0,N)$-space with $\mm(X)=1$ and let $f\in L^1(X,\mm)$ with $f \geq 0$ $\mm$-a.e. Then, for every $T>0$ one has
\begin{equation}\label{eq:LiYau1}
|D \H_T f|_w ^2 - (\Delta \H_T f) (\H_T f) \leq \frac{N}{2T} (\H_T f)^2 \quad \mm\text{-a.e.} \quad .
\end{equation}
If moreover $f \mm \in \probt X$, then the inequality above can be rewritten as 
\begin{equation}\label{eq:LiYau2}
|D \log \H_T f|_{w, (\H_T f) \mm}^2- \frac{d}{dt}_{|_{t=T}} \log \H_t f \leq \frac{N}{2T} \quad (\H_t f)  \mm\text{-a.e.}, 
\end{equation}
where $|\cdot|_{w, (\H_T f) \mm}$ denotes the weak upper differential with respect to the reference measure $(\H_T f) \mm$.
\end{theorem}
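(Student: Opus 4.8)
The plan is to follow the Bakry-Ledoux / Baudoin-Garofalo semigroup interpolation method, transplanted to the $\RCD^*$ setting, using the self-improved dimensional Bochner inequality \eqref{eq:DBI} as the only curvature input. Fix $T>0$ and a nonnegative $f\in L^1\cap L^2(X,\mm)$ (the general $L^1$ case follows by approximation at the end, since $\H_T$ is a contraction on $L^1$ and regularizes to $L^2$ for $T>0$). For $t\in(0,T]$ set $u=\H_T f$ and consider the function $t\mapsto \H_t\!\big(|D\H_{T-t}f|_w^2\big)$; more precisely, with $\phi(t):=\H_t\!\big((\H_{T-t}f)\,|D\log \H_{T-t}f|_w^2\big)$ one wants to show $\phi$ is (in an integrated, $\mm$-a.e. sense) controlled so that $\phi(T)-\phi(0^+)$ produces the bound. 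The cleaner route, already in the $K=0$ case, is to work with $\Lambda(t):=\H_t\!\big(\Gamma(\H_{T-t}f)\big)$ where $\Gamma$ denotes the carré du champ $|D\cdot|_w^2$; differentiating in $t$ one formally gets $\frac{d}{dt}\Lambda(t)=2\,\H_t\!\big(\Gamma_2(\H_{T-t}f)\big)$, and the $N$-Bochner inequality with $K=0$ gives $\Gamma_2(g)\geq \frac1N(\Delta g)^2$. Plugging $g=\H_{T-t}f$ and using Jensen/Cauchy--Schwarz against the heat kernel one obtains a differential inequality of Riccati type for the relevant ratio, whose integration from $0$ to $T$ yields exactly the factor $N/(2T)$.

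\textbf{Key steps in order.} First I would establish the needed regularity: for $f\in L^2$, $\H_T f$ lies in the domain of $\Delta$ with $|D\H_T f|_w\in L^2$, $\Delta\H_T f\in L^2$, and all the functions appearing (products, $\Gamma$, $\Gamma_2$) are in $L^1$ so that $\H_t$ may be applied and the maps $t\mapsto\H_t(\cdots)$ are absolutely continuous on compact subintervals of $(0,T)$ with the expected derivatives; here I rely on the contraction/regularization properties of the linear heat semigroup in $\RCD^*$ and on the $L^2$ self-improvement of Bochner from \cite{Savare2013}. Second, I would write the monotonicity formula: define, for $0<t<T$, $G(t):=\H_t\!\big((\Delta \H_{T-t}f)\,\H_{T-t}f\big)$ and $F(t):=\H_t\!\big(|D\H_{T-t}f|_w^2\big)$, compute $F'$ and $G'$ using $\frac{d}{ds}\H_s=\Delta\H_s$, the chain rule $\Delta\Gamma(g)/2=\Gamma_2(g)+\Gamma(g,\Delta g)$ (valid weakly in $\RCD^*$), and then invoke \eqref{eq:DBI} with $K=0$ to bound $\Gamma_2$ from below by $\frac1N(\Delta g)^2$. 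Third, combine these into a single scalar differential inequality for the quantity $\psi(t):=F(t)-G(t)$ (morally $\psi'\leq -\frac{2}{N}\cdot\frac{\psi^2}{\H_{T-t}f\text{-weighted average}}$), integrate over $[\varepsilon,T]$, let $\varepsilon\downarrow 0$ using $\psi(0^+)\le$ (something finite, indeed $F(0^+),G(0^+)$ bounded by the $L^2$ data), and read off \eqref{eq:LiYau1} by comparing with $u^2=(\H_T f)^2$. Fourth, pass from the dense class $L^1\cap L^2$ to all nonnegative $f\in L^1$ by approximation, using lower semicontinuity of $|D\cdot|_w$ and continuity of $\H_T$ on $L^1$.

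\textbf{From \eqref{eq:LiYau1} to \eqref{eq:LiYau2}.} Assuming $f\mm\in\probt X$ and $f\ge 0$, note $\H_Tf\mm\in\probt X$ as well (mass and second moment are preserved/controlled by the heat flow under $K=0$, using that $\H_T$ is mass-preserving and $W_2$-contractive), so the weighted reference measure $\nn_T:=(\H_Tf)\mm$ is a probability measure with finite second moment and the weighted upper differential $|D\cdot|_{w,\nn_T}$ makes sense as recalled in Subsection \ref{Subsec:Calculus}. On the region $\{\H_Tf>0\}$ one has the pointwise identities $|D\log\H_Tf|_w^2=|D\H_Tf|_w^2/(\H_Tf)^2$ and, by the scaling property of the weak upper differential under a change of reference density, $|D\log\H_Tf|_{w,\nn_T}^2$ coincides $\nn_T$-a.e. with the same expression; moreover $\frac{d}{dt}_{|t=T}\log\H_tf=(\Delta\H_Tf)/(\H_Tf)$ there. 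Dividing \eqref{eq:LiYau1} by $(\H_Tf)^2$ on $\{\H_Tf>0\}$ and reorganizing therefore yields \eqref{eq:LiYau2} $\nn_T$-a.e.; on $\{\H_Tf=0\}$, which is $\nn_T$-negligible, there is nothing to prove.

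\textbf{Main obstacle.} I expect the delicate point to be the rigorous justification of the differentiation under the semigroup and the chain rule for $\Delta\Gamma(g)$ in the weak $\RCD^*$ formalism — i.e.\ making the formal Bakry-Ledoux computation precise when $\Delta$ is only a densely defined self-adjoint operator and $\Gamma(g)$ need not lie in the domain of $\Delta$ in a strong sense. This is exactly where the self-improvement of the Bochner inequality of \cite{Savare2013} and the fine calculus of \cite{AGSBaEm}, \cite{AmbrosioGigliSavare12} are needed: one works with test functions, exploits the measure-valued Laplacian $\mathbf{\Delta}\Gamma(g)$, and checks that all integrations by parts are legitimate for $g=\H_{T-t}f$ with $f$ in the good dense class. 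The passage $\varepsilon\downarrow 0$ requires a uniform $L^1$-bound on $F(\varepsilon),G(\varepsilon)$, which again follows from the $L^2$ regularization estimates for the heat flow. Everything else is the standard Riccati integration and a routine approximation argument.
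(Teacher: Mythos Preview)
Your overall plan (semigroup interpolation along $t\mapsto \H_t(\cdots\H_{T-t}f\cdots)$, differentiate, apply the dimensional Bochner inequality, integrate) is exactly the framework the paper uses. However, the specific implementation you sketch has a genuine gap: you abandon the logarithm. You correctly introduce $\phi(t)=\H_t\big((\H_{T-t}f)\,|D\log\H_{T-t}f|_w^2\big)$, which is precisely the paper's $\Phi(t)$, but then switch to the ``cleaner'' $\Lambda(t)=\H_t(\Gamma(\H_{T-t}f))$ and the pair $F,G$. With $g_t:=\H_{T-t}f$ one has $F'(t)=2\,\H_t(\Gamma_2(g_t))\ge \tfrac{2}{N}\H_t((\Delta g_t)^2)\ge \tfrac{2}{N}(\Delta\H_Tf)^2$ by Jensen, and integrating this over $[0,T]$ yields the \emph{dimensional Bakry--\'Emery gradient estimate} $|D\H_Tf|_w^2+\tfrac{2T}{N}(\Delta\H_Tf)^2\le \H_T(|Df|_w^2)$, not the Li--Yau inequality. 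The Riccati inequality you assert for $\psi=F-G$, ``morally $\psi'\le -\tfrac{2}{N}\psi^2/(\ldots)$'', does not follow from Bochner applied to $g_t$ (and even the sign is inconsistent with obtaining an \emph{upper} bound on $\psi(0)$). The Li--Yau structure comes from applying Bochner to $\log g_t$: one uses $\Gamma_2(\log g_t)\ge \tfrac1N(\Delta\log g_t)^2$ together with the chain rule $\Delta\log g=\tfrac{\Delta g}{g}-|D\log g|^2$, which is exactly why the paper works with $\Phi$ and not with $\Lambda$.

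Two further ingredients from the paper are absent from your sketch and are not optional. First, the argument is run for $f_\varepsilon:=f+\varepsilon$ so that $\H_{T-t}f_\varepsilon\ge\varepsilon>0$ and $\log\H_{T-t}f_\varepsilon$ is well defined; one lets $\varepsilon\downarrow 0$ at the end. Second, and more importantly, the endpoint $t=T$ is handled not by bounding $\psi(T)$ (which would require differentiability of $f$) but by introducing a weight $a\in C^1([0,T])$ with $a(0)=1$, $a(T)=0$ and studying $t\mapsto a(t)\Phi(t)$. The paper linearizes $(\Delta\log g_t)^2\ge 2\gamma(t)\,\Delta\log g_t-\gamma(t)^2$ and chooses $\gamma=\tfrac{N}{4}\tfrac{a'}{a}$ (for $K=0$) so that the coefficient of $\Phi$ in $(a\Phi)'$ vanishes; after integration and the substitution $V=\sqrt{a}$ this reduces to minimizing $\int_0^T V'^2$ subject to $V(0)=1$, $V(T)=0$, whose optimizer $V(t)=1-t/T$ produces the constant $N/(2T)$. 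Your ``Riccati'' heuristic is a proxy for this weighted-integration step, but as written it is not derivable from your $F,G$ quantities. Your treatment of \eqref{eq:LiYau2} from \eqref{eq:LiYau1} is correct and matches the paper (via the change-of-reference-measure result).
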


The second main result of the paper is a generalization to the setting of $\RCD^*(K,N)$-spaces of an inequality which was originally proved in the smooth setting by Bakry and Qian in \cite{BQ1999}. 
 
\begin{theorem}[Bakry-Qian inequality]\label{thm:Bakry-Qian}
Let $(X,\sfd,\mm)$ be a $\RCD^*(K,N)$-space with $\mm(X)=1$  for some $K>0$. Then, for every  $T>0$ and every $f\in L^1(X,\mm)$ with $f \geq 0$ $\mm$-a.e.  one has 
\begin{equation}\label{eq:BQ1}
\Delta \H_T f  \leq \frac{N K}{4} \, \H_T f \quad \mm\text{-a.e. } \quad.
\end{equation}
Of course, if we choose  the continuous representatives of  $\Delta \H_T f$ and $\H_T f$, then  the estimate \eqref{eq:BQ1} holds true for every $x \in X$.
\end{theorem}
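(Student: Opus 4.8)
The plan is to deduce the Bakry--Qian inequality \eqref{eq:BQ1} from the Li-Yau type inequality that would be developed in the $K>0$ case, combined with an optimization over the time parameter. More precisely, I would first establish (as an intermediate result, parallel to Theorem~\ref{thm:Li-Yau} but with $K>0$) a one-parameter family of gradient estimates of the form
\begin{equation*}
|D \H_T f|_w^2 - (\Delta \H_T f)(\H_T f) \leq \Phi(K,N,T)\, (\H_T f)^2 \quad \mm\text{-a.e.},
\end{equation*}
for a suitable function $\Phi$ obtained by running the $\Gamma$-calculus/Bochner argument with the $K|\nabla f|^2$ term retained. The key structural input is the self-improved dimensional Bochner inequality \eqref{eq:DBI}, applied to $u=\H_t f$ along the flow, which after introducing the auxiliary functional $\phi(t)=\int a(t)\,\Gamma(\H_{T-t} f)/\H_{T-t} f \cdot g + \dots$ (in the spirit of Baudoin--Garofalo \cite{BG2011}) yields, upon choosing the weight functions $a(\cdot)$ optimally, the pointwise bound above.

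The second and decisive step is to let $T$ vary. The left-hand side of the gradient estimate is independent of the ``running time'' in the sense that $\Delta \H_T f$ can be extracted from it, so I would rewrite the family of inequalities as a lower bound
\begin{equation*}
-\frac{\Delta \H_T f}{\H_T f} \geq -\Phi(K,N,\tau) - \frac{|D\H_T f|_w^2}{(\H_T f)^2}\cdot(\text{something}),
\end{equation*}
and then \emph{optimize over $\tau>0$}. The point of the Bakry--Qian trick is that when $K>0$ the optimal choice of the time parameter is an \emph{interior} critical point (unlike the $K=0$ case where one sends a parameter to a boundary value and gets the $N/2T$ term), and the resulting optimal value is exactly $\Delta \H_T f \leq \frac{NK}{4}\H_T f$, with the gradient term dropping out or being absorbed because it has a favorable sign. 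Concretely, one differentiates the bound in the free parameter, finds the minimizer explicitly (this is where the constant $NK/4$ emerges, from minimizing an expression like $\frac{N}{2}\big(\sqrt{K}\coth(\sqrt K \tau) \cdot (\dots)\big)$ or a rational function of $\tanh$), and substitutes back.

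The main obstacle I anticipate is twofold. First, the derivation of the sharp one-parameter gradient estimate in the $K>0$ case requires carefully tracking the weight functions in the $\Gamma$-calculus so that the final optimization is clean; the curvature term interacts with the dimension term in the differential inequality for $\phi$, and getting the constant to come out as exactly $NK/4$ (rather than a worse constant) is delicate. Second --- and this is genuinely the hard part in the nonsmooth setting --- all manipulations (integration by parts, chain rule for $\Delta$, the self-improvement of Bochner, and especially the pointwise-a.e.\ statements) must be justified using only the weak calculus available in $\RCD^*(K,N)$ spaces, i.e.\ through the machinery of \cite{Ambrosio-Gigli-Savare11b,AGSBaEm,Savare2013}: one works with test functions, uses the density of nice functions in the domain of $\Delta$, exploits $L^\infty$--$\Lip$ regularization of the heat flow to make sense of quantities like $\Gamma(\H_T f)/\H_T f$, and only at the end passes to continuous representatives (whose existence for $\Delta\H_T f$ and $\H_T f$ follows from the regularizing properties of the heat semigroup in these spaces) to upgrade the $\mm$-a.e.\ inequality to an everywhere inequality. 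The truncation $f\wedge n$ and the normalization issues coming from $f\in L^1$ rather than $L^2$ would be handled by a routine approximation argument at the very end.
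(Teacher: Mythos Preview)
Your proposal is correct and follows the same overall route as the paper: both start from the generalized Li--Yau inequality with the $K$-term retained (the paper's Proposition~\ref{prop:PreLiYau}, equivalently \eqref{eq:PreLIYAUV}), drop the nonnegative gradient term $|D\log\H_T f|_w^2$, and optimize what remains to extract the constant $NK/4$. The paper's execution is more concrete and more elementary than what you sketch, however. It reuses the \emph{linear} test function $V(t)=1-t/T$ from the Li--Yau proof (no $\coth$ or $\tanh$ appear), which after discarding the gradient term gives
\[
\Bigl(\tfrac{2Kt}{3}-1\Bigr)\frac{\Delta\H_t f_\varepsilon}{\H_t f_\varepsilon}\leq \tfrac{N}{2}\Bigl(\tfrac{1}{t}+\tfrac{K^2t}{3}-K\Bigr),
\]
and then a one-line algebraic check shows the resulting upper bound on $\Delta\H_t f_\varepsilon/\H_t f_\varepsilon$ is at most $NK/4$ precisely when $t\geq 2/K$. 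So the ``optimization'' is not over an auxiliary $\tau$ independent of the flow time; it is simply the observation that a rational function of $t$ is dominated by $NK/4$ on a half-line. Your hyperbolic-function route, closer to the original Bakry--Qian argument, would also work and --- if carried out with a genuinely free parameter at fixed $T$ --- would deliver the bound directly for every $T>0$, whereas the paper's linear choice yields it only for $t\geq 2/K$. Finally, the approximation issue is handled in the paper by the single shift $f_\varepsilon=f+\varepsilon$ followed by $\varepsilon\downarrow 0$; no truncation $f\wedge n$ is used.
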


In out third main result we extend to the setting of $\RCD^*(K,N)$-spaces an inequality which in the smooth setting was obtained in \cite{BG2011} by Baudoin and the first named author. Such inequality will be crucial for obtaining an Harnack inequality for the heat flow.

\begin{theorem}\label{thm:BG}
Let $(X,\sfd,\mm)$ be a $\RCD^*(K,N)$-space with $\mm(X)=1$ and  let $f\in L^1(X,\mm)$ with $f \geq 0$ $\mm$-a.e. Then, for every $T>0$ one has
\begin{equation}\label{eq:BG}
|D \H_T f|_w ^2   \leq e^{-\frac{2 K T}{3}} (\Delta \H_T f) \, \H_T f + \frac{N K}{3} \frac{e^{-\frac{4 K T}{3}}}{1-e^{-\frac{2KT}{3}}} (\H_T f)^2 \quad \mm\text{-a.e.}
\end{equation}
If moreover $f \mm \in \probt X$, then the inequality \eqref{eq:BG} can be rewritten as 
\begin{equation}\label{eq:BG2}
|D \log \H_T f|_{w, (\H_T f) \mm}^2 \leq   e^{-\frac{2 K T}{3}} \frac{\Delta \H_T f} {\H_T f} + \frac{N K}{3} \frac{e^{-\frac{4 K T}{3}}}{1-e^{-\frac{2KT}{3}}}  \quad  \quad (\H_t f)  \mm\text{-a.e.}, 
\end{equation}
where $|\cdot|_{w, (\H_T f) \mm}$ denotes the weak upper differential with respect to the reference measure $(\H_T f) \mm$.
\end{theorem}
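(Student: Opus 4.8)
\textbf{Proof proposal for Theorem \ref{thm:BG}.}

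The plan is to run a Bakry--Ledoux / Baudoin--Garofalo type entropy-functional argument on the heat flow, using the dimensional Bochner inequality \eqref{eq:DBI} in its self-improved form as the sole input from the curvature condition. Fix $T>0$, a nonnegative $f\in L^1\cap L^2$ (the general $L^1$ case follows by truncation, approximation and the usual regularizing properties of $\H_t$), and work with $u_t:=\H_t f$, which for $t>0$ is in the domain of $\Delta$ and enjoys all the regularity/positivity needed. Introduce, for $t\in(0,T)$, the interpolation functional
\begin{equation*}
\Phi(t):=a(t)\,\H_{T-t}\!\bigl(u_t\,\Delta u_t\bigr)+b(t)\,\H_{T-t}\!\bigl(|D u_t|_w^2\bigr),
\end{equation*}
with smooth weights $a,b:(0,T)\to\R$ to be chosen, where the outer $\H_{T-t}$ makes $\Phi$ a function on $X$ (all manipulations are to be read $\mm$-a.e., or tested against nonnegative $\phi\in L^\infty$). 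Differentiating in $t$ and using $\frac{d}{dt}\H_{T-t}g=\H_{T-t}(\Delta g)-\H_{T-t}(g)'$ together with $\partial_t u_t=\Delta u_t$, one gets an expression of the form $\Phi'(t)=\H_{T-t}\bigl(\text{(bulk term)}\bigr)$ where the bulk term involves $\Delta\frac{|Du_t|_w^2}{2}$, $(\Delta u_t)^2$, $\nabla u_t\cdot\nabla\Delta u_t$ and $|Du_t|_w^2$. Applying \eqref{eq:DBI} with $f=u_t$ (in weak form, which is licit since $\H_{T-t}$ preserves nonnegativity) yields a differential inequality $\Phi'(t)\ge \H_{T-t}(\cdots)$; the weights $a,b$ are chosen exactly so that the right-hand side collapses to a closed ODE inequality for $\Phi$ (this is where the exponent $-2K/3$, coming from solving $b'=\tfrac{2}{N}(\text{something})$-type relations, and the factor $NK/3$ emerge). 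Integrating that ODE from $0^+$ to $T$ and reading off the boundary behavior — at $t\to 0^+$ one recovers $a(0)u_T\Delta u_T+b(0)|Du_T|_w^2$, at $t\to T$ one needs $b(t)\to$ finite and the $a$-term to be controlled, using $\Delta u_t=\partial_t u_t$ and the short-time expansion / the fact that $u_t\to f$ — produces \eqref{eq:BG}.

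Concretely, I expect the clean way to organize this is to follow \cite{BG2011}: set $F(t):=\H_{T-t}\bigl(|D u_t|_w^2\bigr)$ and $G(t):=\H_{T-t}\bigl(u_t\,\Delta u_t\bigr)=\H_{T-t}\bigl(u_t\,\partial_t u_t\bigr)$, compute
\begin{equation*}
F'(t)=-2\,\H_{T-t}\Bigl(\Delta\tfrac{|Du_t|_w^2}{2}-\nabla u_t\cdot\nabla\Delta u_t\Bigr),\qquad G'(t)=\H_{T-t}\bigl((\Delta u_t)^2\bigr),
\end{equation*}
so that Bochner \eqref{eq:DBI} reads $-\tfrac12 F'(t)\ge \tfrac1N G'(t)+K F(t)$, i.e. $F'(t)\le -\tfrac2N G'(t)-2K F(t)$. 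Then the quantity $\Psi(t):=e^{2Kt/3}F(t)$ satisfies, after inserting the elementary inequality relating $G'$ and $G$ that comes from Cauchy--Schwarz $(\Delta u_t)^2\cdot 1\ge$ square of its average — or more precisely from the $\RCD^*(K,N)$ Li-Yau-type control already available — a differential inequality of Riccati/Gronwall type whose integration gives the stated coefficient $\frac{NK}{3}\,\frac{e^{-4KT/3}}{1-e^{-2KT/3}}$. The precise bookkeeping of constants is routine once the functional and the application of Bochner are set up; I will carry it out by a direct ODE comparison. The equivalence of \eqref{eq:BG} with \eqref{eq:BG2} under $f\mm\in\probt X$ is then purely a change-of-reference-measure computation for the weak upper differential: $|D\log\H_Tf|^2_{w,(\H_Tf)\mm}=\frac{|D\H_Tf|_w^2}{(\H_Tf)^2}$, exactly as already used for Theorem \ref{thm:Li-Yau}, and dividing \eqref{eq:BG} through by $(\H_Tf)^2>0$ gives \eqref{eq:BG2}.

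Two technical points require care and will be the main obstacles. First, all the pointwise identities above ($F'$, $G'$, the chain rule $\partial_t u_t=\Delta u_t$, $\nabla u_t\cdot\nabla\Delta u_t$ making sense) must be justified in the $\RCD^*(K,N)$ calculus rather than classically: here one uses the self-improvement of Bochner à la Savar\'e \cite{Savare2013}, the fact that for $t>0$ one has $\H_t f\in D(\Delta)$ with $\Delta\H_tf\in D(\Ch)$ and good integrability, and the Leibniz rule for $|D\cdot|_w$ on products of such functions, as developed in \cite{AGSBaEm,AMS,Erbar-Kuwada-Sturm13}; these are exactly the ingredients marshalled for Theorems \ref{thm:Li-Yau} and \ref{thm:Bakry-Qian}, so the machinery is in place but must be invoked carefully, in particular testing every differential (in)equality against nonnegative bounded $\phi$ and integrating by parts only against $\H_{T-t}\phi$. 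Second, the limits $t\to 0^+$ and $t\to T^-$ in the integrated ODE: at $t\to T^-$ the prefactor $1/(1-e^{-2KT/3})$ blows up, so one must check that the accompanying functional is $O(t-T)$, i.e. that $\H_{T-t}(|Du_t|_w^2)$ and $\H_{T-t}(u_t\Delta u_t)$ have matching short-time behavior — this is again handled by the contraction and regularization estimates for the heat semigroup. Modulo these justifications — which are the genuine content of adapting \cite{BG2011} to the non-smooth setting — the proof is the ODE argument sketched above.
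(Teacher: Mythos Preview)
Your strategy---an interpolation functional plus Bochner plus an ODE---is the right genre, but the concrete implementation you sketch does not actually close, and it diverges from what the paper (and \cite{BG2011}) really does.

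First, two computational errors. With $u_t=\H_t f$ and $G(t)=\H_{T-t}(u_t\Delta u_t)$ one does \emph{not} get $G'(t)=\H_{T-t}((\Delta u_t)^2)$; a direct computation gives $G'(t)=-2\,\H_{T-t}(\nabla u_t\cdot\nabla\Delta u_t)$, so your differential inequality $F'\le -\tfrac{2}{N}G'-2KF$ is not what Bochner yields. Second, the boundary terms are reversed: at $t\to 0^+$ your $\Phi$ gives $a(0)\H_T(f\Delta f)+b(0)\H_T(|Df|_w^2)$, which requires regularity of the initial datum $f$ that is simply not available; the desired quantities $(\H_Tf)(\Delta\H_Tf)$ and $|D\H_Tf|_w^2$ sit at $t=T$, not at $t=0$.

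The deeper gap is structural. The target inequality \eqref{eq:BG} has the nonlinear feature that the gradient term is compared to the \emph{product} $(\Delta\H_Tf)(\H_Tf)$ and to $(\H_Tf)^2$. Your quadratic functionals $F,G$ produce $\H_{T-t}((\Delta u_t)^2)$ and $\H_{T-t}(|Du_t|_w^2)$, and there is no honest way to extract a $(\H_Tf)^2$ term from these by ``Cauchy--Schwarz'' alone. The paper handles this by working with the \emph{logarithmic} functional
\[
\Phi(t)=\H_t\bigl((\H_{T-t}f)\,|D\log\H_{T-t}f|_w^2\bigr),
\]
computing $\tfrac{d}{dt}\int\Phi\,\varphi=2\int(\H_{T-t}f)(\H_t\varphi)\,\d\Gamma_2^*(\log\H_{T-t}f)$ (Lemma~\ref{lemma1}), and then using the elementary pointwise inequality $(\Delta\log v)^2\ge 2\gamma\,\Delta\log v-\gamma^2$ together with the chain rule $\Delta\log v=\tfrac{\Delta v}{v}-|D\log v|_w^2$ to introduce a free function $\gamma(t)$ (Proposition~\ref{prop2}). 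This is the ``completing the square'' step that your sketch is missing: it is precisely what turns Bochner into a closed inequality featuring $\Delta\H_Tf$ and $\H_Tf$ with adjustable coefficients. One then picks $\gamma$ so that the $\Phi$-coefficient vanishes (see \eqref{eq:defgamma}), integrates against a weight $a(t)=V(t)^2$ with $V(0)=1$, $V(T)=0$ to kill the bad endpoint, and the specific choice
\[
V(t)=\frac{e^{-Kt/3}\bigl(e^{-2Kt/3}-e^{-2KT/3}\bigr)}{1-e^{-2KT/3}}
\]
in the resulting master inequality \eqref{eq:PreLIYAUV} produces exactly the constants $e^{-2KT/3}$ and $\tfrac{NK}{3}\,\tfrac{e^{-4KT/3}}{1-e^{-2KT/3}}$. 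The $\varepsilon$-shift $f_\varepsilon=f+\varepsilon$ and the passage to \eqref{eq:BG2} via Theorem~\ref{thm:change} are then exactly as in the Li--Yau proof.

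In short: switch to the logarithmic functional, insert the free parameter $\gamma(t)$ via $(\Delta\log)^2\ge 2\gamma\Delta\log-\gamma^2$, and choose $a(t)$ (equivalently $V(t)$) as above. Your linear $F$--$G$ ansatz, as written, cannot deliver the required inequality.
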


The fourth result is an Harnack inequality for the heat flow. Let us remark that while the proof of the previous results was a (non trivial) adaptation of the proofs in the smooth setting mainly from \cite{BL2006}, \cite{BQ1999} and  \cite{BG2011}, the proof of the Harnack inequality uses new ideas from optimal transportation. Indeed the problem in adapting the smooth proofs is the (a priori)  lack of continuity of $|D \H_t f|_w$, in particular it is not clear if its restriction to a fixed geodesic makes sense. To overcome this difficulty we work with  families of geodesics where some optimal transportation is performed and, thanks also to the construction of good geodesics under curvature bounds by Rajala \cite{R2012}, we manage to prove the following theorem.

\begin{theorem}[Harnack inequality]\label{thm:Harnack}
Let $(X,\sfd,\mm)$ be a $\RCD^*(K,N)$-space with $\mm(X)=1$, and  let $f\in L^1(X,\mm)$ with $f \geq 0$ $\mm$-a.e. If $K\geq 0$, then for every $x,y \in X$ and $0<s<t$ we have  
\begin{equation}\label{eq:HarnackK>0}
(\H_t f)(y)\geq (\H_sf)(x) \; e^{- \frac{\sfd^2(x,y)}{4 (t-s)e^{\frac{2 Ks}{3}}}}    \left(\frac{1-e^{\frac{2 K}{3} s}}{1-e^{\frac{2 K}{3} t}} \right)^\frac{N}{2}.
\end{equation}  
If instead $K<0$, then
\begin{equation}\label{eq:HarnackK>0}
(\H_t f)(y)\geq (\H_sf)(x) \; e^{- \frac{\sfd^2(x,y)}{4 (t-s)e^{\frac{2 Kt}{3}}}}    \left(\frac{1-e^{\frac{2 K}{3} s}}{1-e^{\frac{2 K}{3} t}} \right)^\frac{N}{2}.
\end{equation}
\end{theorem}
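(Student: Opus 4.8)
The plan is to derive the Harnack inequality by integrating, along a suitable geodesic from $x$ to $y$, a pointwise differential inequality for the quantity $t\mapsto \log(\H_{a(t)}f)(\gamma_t)$, where $\gamma$ is a geodesic and $a$ is an appropriate time-reparametrization. The heuristic (following \cite{BG2011} in the smooth case) is the following: set $\phi(t) = \log (\H_{a(t)}f)(\gamma_t)$ for $t\in[0,1]$, with $a(0)=s$, $a(1)=t$, and $\gamma_0=x$, $\gamma_1=y$. Formally, by the chain rule,
\begin{equation*}
\frac{d}{dt}\phi(t) = a'(t)\,\frac{d}{d\tau}_{|_{\tau=a(t)}}\log(\H_\tau f)(\gamma_t) + \nabla \log(\H_{a(t)}f)(\gamma_t)\cdot \dot\gamma_t,
\end{equation*}
and using $|\dot\gamma_t| = \sfd(x,y)$ together with Cauchy--Schwarz, $\nabla\log\H_\tau f\cdot\dot\gamma_t \geq -|D\log\H_\tau f|_w\,\sfd(x,y)$. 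One then feeds in Theorem \ref{thm:BG}, i.e. the bound on $|D\log\H_T f|_{w,(\H_T f)\mm}^2$ in terms of $\frac{d}{d\tau}\log\H_\tau f$ and the dimensional term, and chooses $a(t)$ (an exponential-type profile dictated by the factor $e^{-2K\tau/3}$ appearing in \eqref{eq:BG2}) so that the resulting ODE integrates cleanly, producing exactly the Gaussian factor $\exp(-\sfd^2(x,y)/(4(t-s)e^{2Ks/3}))$ (resp. with $e^{2Kt/3}$ when $K<0$) and the dimensional factor $\big((1-e^{2Ks/3})/(1-e^{2Kt/3})\big)^{N/2}$.

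The main obstacle, as the authors themselves flag, is that $|D\H_\tau f|_w$ is a priori only defined $\mm$-a.e., so the restriction of $\phi$ to a single geodesic $\gamma$ is not meaningful, and neither is the pointwise chain rule above. The fix is to run the argument not along one geodesic but along a family: pick $\mu_0 = \rho_0\mm$ concentrated near $x$ and $\mu_1=\rho_1\mm$ concentrated near $y$, let $\ppi\in\gopt(\mu_0,\mu_1)$ be an optimal dynamical plan, and consider $t\mapsto \int \log(\H_{a(t)}f)\,d(e_t)_\sharp\ppi$. Here one should invoke Rajala's construction \cite{R2012} of good geodesics under curvature bounds, ensuring that the interpolating measures $(e_t)_\sharp\ppi$ have densities bounded in $L^\infty$ (or at least absolutely continuous with controlled densities), so that the a.e.-defined quantities $|D\H_{a(t)}f|_w$ and $\Delta\H_{a(t)}f$ can be legitimately integrated against them, and Theorem \ref{thm:BG} — which is an $\mm$-a.e. statement — can be applied after integration. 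The absolute continuity in time of the curve $t\mapsto(e_t)_\sharp\ppi$ in $W_2$, together with metric-speed bounds ($|\dot{(e_t)_\sharp\ppi}| = \sfd(x,y)$ for the localized measures in the limit), replaces the pointwise $|\dot\gamma_t|$ computation; the Cauchy--Schwarz step becomes an application of the Kuwada-type duality / the definition of $|Df|_w$ tested against the velocity of the curve of measures.

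Concretely I would proceed as follows. \textbf{Step 1:} Fix $x,y$; for $\eps>0$ let $\mu_0^\eps, \mu_1^\eps$ be normalized restrictions of $\mm$ to small balls around $x,y$, and take $\ppi^\eps\in\gopt$ optimal between them via Rajala's good geodesics, so that $(e_t)_\sharp\ppi^\eps \leq C_\eps\,\mm$. \textbf{Step 2:} Define $F^\eps(t) = \int \log(\H_{a(t)}f)\,d(e_t)_\sharp\ppi^\eps$ with $a$ the exponential reparametrization to be fixed in Step 4; show $F^\eps$ is locally absolutely continuous on $(0,1)$ using the regularizing properties of $\H_\tau$ (so $\H_\tau f$ is bounded below away from $0$, Lipschitz, with $\Delta\H_\tau f\in L^2$) and the $W_2$-absolute continuity of the interpolant. \textbf{Step 3:} Differentiate: bound $\frac{d}{dt}F^\eps(t)$ from below by $a'(t)\int \frac{d}{d\tau}_{|_{a(t)}}\log\H_\tau f\, d(e_t)_\sharp\ppi^\eps - \sfd(x,y)\big(\int |D\log\H_{a(t)}f|_{w}^2\, d(e_t)_\sharp\ppi^\eps\big)^{1/2}\cdot(\text{normalization})$ — here one uses that $|\dot{(e_t)_\sharp\ppi^\eps}|\le\sfd(x,y)(1+o_\eps(1))$ and the test-plan characterization of $|D\cdot|_w$. \textbf{Step 4:} Insert Theorem \ref{thm:BG} (integrated against $(e_t)_\sharp\ppi^\eps$, legitimate by Step 1), optimize the Young/Cauchy--Schwarz constant and choose $a(t)$ so the differential inequality becomes $\frac{d}{dt}F^\eps(t)\ge -\frac{\sfd^2(x,y)}{4(t-s)e^{2Ks/3}}\cdot(\dots) - \frac{N}{2}\frac{d}{dt}\log(1-e^{2Ka(t)/3}) + o_\eps(1)$ (with the $K<0$ variant using $e^{2Kt/3}$); integrate from $0$ to $1$. \textbf{Step 5:} Let $\eps\to0$: $(e_0)_\sharp\ppi^\eps\to\delta_x$, $(e_1)_\sharp\ppi^\eps\to\delta_y$, so $F^\eps(0)\to\log(\H_s f)(x)$ and $F^\eps(1)\to\log(\H_t f)(y)$ (using continuity of the heat-flow representatives guaranteed by the regularity theory), and exponentiating yields \eqref{eq:HarnackK>0}. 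The delicate points are Step 2 (absolute continuity of $F^\eps$, which needs the $L^\infty$ density bound from Rajala) and Step 3 (making the Cauchy--Schwarz along the family of geodesics rigorous via the definition of the weak upper gradient), while Step 4 is a calibrated but routine ODE computation mirroring \cite{BG2011}.
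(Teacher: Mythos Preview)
Your proposal is essentially the paper's own proof: replace the single geodesic by an optimal dynamical plan $\ppi^r$ between normalized balls around $x$ and $y$ (via Rajala's Theorem~\ref{thm:ImprGeod}, so that $\ppi^r$ is a test plan and the $\mm$-a.e.\ inequality \eqref{eq:BG} can be integrated against it), differentiate $\int\log \H_{\cdot} f_\varepsilon\,\d(\e_\tau)_\sharp\ppi^r$ using the Sobolev-along-curves property, apply Young's inequality with a free parameter $\eta$, and let $r\downarrow 0$, $\varepsilon\downarrow 0$. Two tactical simplifications the paper makes that you should adopt: (i) work with $f_\varepsilon=f+\varepsilon$ rather than assuming $\H_\tau f$ is bounded away from $0$ (that lower bound is not available a priori); (ii) use the \emph{linear} time interpolation $\tau\mapsto t+\tau(s-t)$ and then simply bound $e^{\frac{2K}{3}(t+\tau(s-t))}\ge e^{\frac{2Ks}{3}}$ for $K\ge 0$ (resp.\ $\ge e^{\frac{2Kt}{3}}$ for $K<0$), choosing $\eta=2(t-s)e^{\frac{2Ks}{3}}$ (resp.\ $2(t-s)e^{\frac{2Kt}{3}}$) --- this produces the two cases of the theorem directly and avoids the need to calibrate an exponential reparametrization $a(t)$.
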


We conclude observing that the inequalities above can be applied to the heat flow starting from a Dirac delta $\delta_x$, the so called heat kernel. Indeed, thanks to \cite[Subsection 6.1]{Ambrosio-Gigli-Savare11b} (see Subsection \ref{subsec:ImprovedRegHeat} for a brief summary), in $\RCD^*(K,N)$-spaces for every $x \in X$ one can define $\H_t \delta_x$; this is an absolutely continuous probability measure with Lipschitz density $p(x,y,t)$ which is non negative and  symmetric in $x$ and $y$. Applying the theorems above to $p$ we obtain the following corollary.

\begin{corollary}[Li-Yau and Harnack type estimates of the heat kernel]
Let $(X,\sfd,\mm)$ be a $\RCD^*(K,N)$-space. Then, the heat kernel $p$ defined above satisfies the following inequalities:
\begin{itemize}
\item[i)] (Li-Yau) If $K=0$, then for every $t>0$ one has
\begin{equation}\label{eq:LiYaup2}
|D \log p(t,x,\cdot)|_{w, \H_t \delta_x}^2- \frac{d}{dt} \log p(t,x,\cdot) \leq \frac{N}{2t} \quad \H_t\delta_x\text{-a.e.}, 
\end{equation}
where $|\cdot|_{w, \H_t \delta_x}$ denotes the weak upper differential with respect to the reference measure $H_t \delta_x$.

\item[ii)] (Bakry-Qian) If $K>0$, then for every  $t>0$  one has 
\begin{equation}\label{eq:BQp1}
\Delta p(t,x,\cdot)  \leq \frac{N K}{4} \, p(t,x,\cdot) \quad \mm\text{-a.e.}
\end{equation}

\item[iii)] (Baudoin-Garofalo) For every $t>0$ one has 
\begin{equation}\label{eq:BGp2}
|D \log p(t,x,\cdot)|_{w, \H_t(\delta_x)}^2 \leq   e^{-\frac{2 K t}{3}} \frac{\Delta p(t,x,\cdot)} {p(t,x,\cdot)} + \frac{N K}{3} \frac{e^{-\frac{4 K t}{3}}}{1-e^{-\frac{2Kt}{3}}}  \quad  \H_t(\delta_x)\text{-a.e.;} 
\end{equation}

\item[iv)] (Harnack) If $K\geq 0$ then for every $x,y \in X$ and $0<s<t$ it holds 
\begin{equation}\label{eq:HarnackpK>0}
p(t,y,z)\geq p(t,x,z) \; e^{- \frac{\sfd^2(x,y)}{4 (t-s)e^{\frac{2 Ks}{3}}}}    \left(\frac{1-e^{\frac{2 K}{3} s}}{1-e^{\frac{2 K}{3} t}} \right)^\frac{N}{2}   \quad.
\end{equation}  
If instead $K<0$ then
\begin{equation}\label{eq:HarnackpK>0}
p(t,y,z)\geq p(t,x,z) \; e^{- \frac{\sfd^2(x,y)}{4 (t-s)e^{\frac{2 Kt}{3}}}}    \left(\frac{1-e^{\frac{2 K}{3} s}}{1-e^{\frac{2 K}{3} t}} \right)^\frac{N}{2}   \quad.
\end{equation}
\end{itemize}
\end{corollary}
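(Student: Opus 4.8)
The corollary is a direct application of the four main theorems (Li-Yau, Bakry-Qian, Theorem~\ref{thm:BG} in its Baudoin-Garofalo form, and the Harnack inequality) to the special choice of initial datum $f = \delta_x$, understood via the improved regularity of the heat flow recalled in Subsection~\ref{subsec:ImprovedRegHeat}. So the plan is not to prove anything new, but rather to verify carefully that the hypotheses of the theorems are met by the heat kernel and that passing to the limit $f \rightsquigarrow \delta_x$ is legitimate.

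\textbf{Step 1: set up the heat kernel as a heat flow.} I would recall from \cite[Subsection~6.1]{Ambrosio-Gigli-Savare11b} (summarized in Subsection~\ref{subsec:ImprovedRegHeat}) that on an $\RCD^*(K,N)$-space, for each fixed $x \in X$, the measure $\H_t \delta_x$ is absolutely continuous with respect to $\mm$ with a Lipschitz, nonnegative, symmetric density $p(t,x,\cdot) = p(t,\cdot,x)$; moreover $\int_X p(t,x,y)\,\d\mm(y) = 1$, so that $p(t,x,\cdot)\,\mm \in \probt X$ whenever the space has finite second moment (in particular when it is bounded; in the general case one localizes or uses that $\H_t\delta_x \in \probt X$ by the Gaussian-type tail estimates available in this setting). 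To apply Theorems~\ref{thm:Li-Yau}, \ref{thm:Bakry-Qian} and \ref{thm:BG}, which are stated for $f \in L^1$ with $f \ge 0$, I would fix a small parameter $\tau > 0$ and set $f_\tau := p(\tau, x, \cdot)$. Then $f_\tau \in L^1(X,\mm)$, $f_\tau \ge 0$ $\mm$-a.e., and by the semigroup property $\H_T f_\tau = \H_{T+\tau}\delta_x = p(T+\tau, x, \cdot)$. Applying the theorems to $f_\tau$ with time $T$ yields exactly the asserted inequalities for $p$ at time $t = T + \tau$.

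\textbf{Step 2: pass to the limit $\tau \downarrow 0$ (or argue directly).} Since $t = T + \tau$ ranges over all of $(0,\infty)$ as $T>0$ and $\tau>0$ vary, the inequalities in (i)--(iv) follow for every $t>0$ without any limiting procedure at all: given $t>0$, just pick $T = t/2$ and $\tau = t/2$. This is the cleanest route and avoids any continuity issue. For item (iv), I would apply Theorem~\ref{thm:Harnack} to $f_\tau$, note that $(\H_s f_\tau)(x') = p(s+\tau, x', z)$ once we further use symmetry to regard the spatial variable correctly, and again absorb $\tau$ into the time variable; here one must be slightly careful that the Harnack inequality in Theorem~\ref{thm:Harnack} is stated \emph{pointwise} (for the continuous representatives), which is precisely why the Lipschitz regularity of $p$ is invoked. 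The statement of (iv) as written fixes the third argument $z$ and varies the second; this is legitimate because, for each fixed $z$, the function $y \mapsto p(t,y,z) = p(t,z,y) = (\H_t\delta_z)$-density is itself $\H_{t}\delta_z$ evaluated, i.e.\ a heat flow $\H_t g$ for $g$ an approximation of $\delta_z$, so Theorem~\ref{thm:Harnack} applies in the variable $y$.

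\textbf{Main obstacle.} There is no serious obstacle; the only points requiring care are bookkeeping ones. First, one must make sure the "weak upper differential with respect to the reference measure $\H_t\delta_x$'' appearing in \eqref{eq:LiYaup2} and \eqref{eq:BGp2} is the same object produced by Theorems~\ref{thm:Li-Yau} and \ref{thm:BG} when $f = f_\tau$, namely $|\cdot|_{w,(\H_T f_\tau)\mm}$ with $(\H_T f_\tau)\mm = p(t,x,\cdot)\mm = \H_t\delta_x$; this is immediate from the identification $\H_T f_\tau = p(t,x,\cdot)$. Second, the "$\frac{d}{dt}$'' in \eqref{eq:LiYaup2} must be read as the time derivative of $t \mapsto \log p(t,x,\cdot)$, which equals $\frac{\Delta p(t,x,\cdot)}{p(t,x,\cdot)}$ by definition of the heat flow; this matches the left-hand side of \eqref{eq:LiYau2} since $\frac{d}{dt}_{|_{t=T}}\log\H_t f_\tau = \frac{\Delta \H_{T+\tau}\delta_x}{\H_{T+\tau}\delta_x}$. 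Finally, in the non-compact case one should note that $\H_t\delta_x \in \probt X$ is not automatic from $\mm(X)=1$ alone, but it does hold in $\RCD^*(K,N)$-spaces because of the finite-dimensionality (Gaussian upper bounds on $p$), which guarantees the finite second moment needed for the $\probt X$-versions \eqref{eq:LiYau2} and \eqref{eq:BG2}; alternatively one simply states (i) and (iii) for the continuous representatives directly from \eqref{eq:LiYau1} and \eqref{eq:BG}.
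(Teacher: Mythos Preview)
Your proposal is correct and takes essentially the same approach as the paper, which in fact gives no explicit proof at all: the corollary is stated in the introduction with the one-line justification ``Applying the theorems above to $p$ we obtain the following corollary.'' Your device of writing $f_\tau = p(\tau,x,\cdot)$, invoking the semigroup property $\H_T f_\tau = p(T+\tau,x,\cdot)$, and then choosing $T,\tau>0$ so that $T+\tau$ hits an arbitrary $t>0$ is exactly the intended reading of that sentence, and your bookkeeping remarks (identifying $(\H_T f_\tau)\mm$ with $\H_t\delta_x$, using symmetry of $p$ for item (iv), and noting the Lipschitz regularity so the pointwise Harnack statement makes sense) are all appropriate.
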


\smallskip
\noindent {\bf Acknowledgment:} The work on this paper begun at the workshop: ``Analysis on metric spaces'', IPAM, UCLA, March 18-22, 2013, and  was nearly completed during the workshop: ``Heat Kernels, Stochastic Processes and Functional Inequalities, Oberwolfach, MFO, May 5-11, 2013. We thank the organizers of both workshops for the gracious invitation and the IPAM and the MFO for the hospitality and the truly remarkable working atmosphere. We also gratefully acknowledge helpful discussions with Luigi Ambrosio and Giuseppe Savar\'e. 

\section{Preliminaries}\label{sec:preli}

\subsection{Calculus, Sobolev spaces and heat flow in metric measure spaces}\label{Subsec:Calculus}

Throughout the paper $(X,\sfd,\mm)$ will be a metric measure space, m.m.s. for short, i.e. $(X,\sfd)$ is a complete and separable metric space and $\mm$ is a non negative Borel measure. Even if some of the statements of this paper hold in case $\mm$ is a sigma finite measure, for simplicity we will always assume $\mm(X)=1$ and $\supp(\mm)=X$.
 
The heat flow and the calculus in a m.m.s have been the object of a series of papers of Ambrosio, Gigli and  Savar\'e  (see \cite{Ambrosio-Gigli-Savare11}, \cite{Ambrosio-Gigli-Savare11b} and \cite{AGSBaEm}); here, we briefly recall some useful facts. For more details the interested reader is referred to the aforementioned articles.

\

Let us start with some basic notations. We shall denote by $\LIP(X)$ the space
of Lipschitz functions,  by $\prob X$ the space of Borel probability measures on the complete and separable metric space $(X,\sfd)$ and by $\probt X \subset \prob X$ the subspace consisting of all the probability measures with finite second moment. Given an open interval $J\subset\R$, an
exponent $p\in [1,\infty]$ and $\gamma:J\to X$, we say that
$\gamma$ belongs to $AC^p(J;X)$ if
$$
\sfd(\gamma_s,\gamma_t)\leq\int_s^t g(r)\,\d r\qquad\forall s,\,t\in
J,\,\,s<t
$$
for some $g\in L^p(J)$. The case $p=1$ corresponds to
\emph{absolutely continuous} curves. It turns out that, if $\gamma$ belongs to $AC^p(J;X)$, there is a 
minimal function $g$ with this property, called
\emph{metric derivative} and given for a.e. $t\in J$ by
$$
|\dot\gamma_t|:=\lim_{s\to t}\frac{\sfd(\gamma_s,\gamma_t)}{|s-t|}.
$$
See \cite[Theorem~1.1.2]{Ambrosio-Gigli-Savare08} for the simple
proof. We say that an absolutely continuous curve $\gamma_t$ has
\emph{constant speed} if $|\dot\gamma_t|$ is (equivalent to) a
constant, and it is a \emph{geodesic} if
\begin{equation}\label{defgeo}
\sfd(\gamma_s,\gamma_t)=|t-s|\sfd(\gamma_0,\gamma_1)\qquad\forall s,\,t\in
[0,1].
\end{equation}
$(X,\sfd)$ is said \emph{geodesic space} if for any $x_0,\,x_1\in
X$ there exists a (constant speed) geodesic $\gamma$ joining $x_0$ and $x_1$ (i.e. $\gamma_0=x_0$ and $\gamma_1=x_1$); all the metric spaces we will work with will be assumed to be geodesic. We will denote by $\geo(X)$ the space of all constant speed
geodesics $\gamma:[0,1]\to X$, namely $\gamma\in\geo(X)$ if
\eqref{defgeo} holds. 

{F}rom the measure-theoretic point of view, when considering measures
on $AC^p(J;X)$ (resp. $\geo(X)$), we shall consider
them as measures on the Polish space $C(J;X)$ endowed with the sup
norm, concentrated on the Borel set $AC^p(J;X)$ (resp. closed set $\geo(X)$). We shall also use the
notation $\e_t:C(J;X)\to X$, $t\in J$, for the evaluation map at time $t$, namely
$\e_t(\gamma):=\gamma_t$;  and $(\e_t)_\sharp: \pro(C(J;X)) \to \pro(X)$ for the induced push-forward map of measures.

We now recall the notions of test plan, weak upper differential, and Sobolev space with respect to a reference probability measure
$\nn$ on $X$ (which may differ from $\mm$).

\begin{definition}[Test plan]\label{def:TestPlan}
We say that $\ppi\in\prob{C([0,1];X)}$ is a \emph{test plan relative to} $\nn$ if:
\begin{itemize}
\item[(i)] $\ppi$ is concentrated on $AC^2([0,1];X)$ and the action of $\ppi$ is finite:
$$
{\cal A}(\ppi):=\int\int_0^1|\dot\gamma_t|^2\,\d t\,\d\ppi(\gamma)<\infty.
$$ 
\item[(ii)] There exists $C\geq 0$ such that $(\e_t)_\sharp\ppi\leq C\nn$ for all $t\in [0,1]$.
\end{itemize}
\end{definition}

The following definition is inspired by the classical concept  of upper differential introduced by Heinonen and Koskela \cite{Heinonen-Koskela98}, that we now illustrate.
A Borel function $G:X\to [0,\infty]$ is an upper differential of a Borel function $f:X\to\R$ if 
$$
|f(\gamma_b)-f(\gamma_a)|\leq \int_a^bG(\gamma_s)|\dot\gamma_s|\,\d s
$$
for any absolutely continuous curve $\gamma:[a,b]\to X$. Being the inequality invariant under reparametrization one can also
reduce to curves defined in $[0,1]$.

\begin{definition}[The space $\calW_\nn$ and weak upper gradients]\label{def:wug}
Let $f:X\to\R$, $G:X\to [0,\infty]$ be Borel functions. We say that $G$ is a weak upper differential of $f$ relative to $\nn$  if
$$
|f(\gamma_1)-f(\gamma_0)|\leq \int_0^1G(\gamma_s)|\dot\gamma_s|\,\d s\qquad\text{for $\ppi$-a.e. $\gamma$}
$$
for all test plans $\ppi$ relative to $\nn$. We write $f\in\calW_\nn$ if $f$ has a weak upper differential in $L^2(X,\nn)$. The weak upper differential
relative to $\nn$ with minimal $L^2(X,\nn)$ norm (the so-called minimal weak upper differential) will be denoted by $|D f|_{w,\nn}$. In case $\nn=\mm$ we will simply write $|Df|_w$ in place of $|Df|_{w,\mm}$.
\end{definition}

\begin{remark}[Sobolev regularity along curves]\label{rem:charaweakgrad}\label{rem:SobCurve}
{\rm
A consequence of $\calW_\nn$ regularity is (see \cite[Remark~4.10]{AmbrosioGigliSavare12}) the Sobolev property along curves, namely for any
test plan $\ppi$ relative to $\nn$ the function $t\mapsto f(\gamma_t)$ belongs to the Sobolev space $W^{1,1}(0,1)$ and
$$
\left|\frac{\d}{\d t}f(\gamma_t)\right|\leq |Df|_w(\gamma_t)\, |\dot\gamma_t|\qquad\text{a.e. in $(0,1)$}
$$ 
for $\ppi$-a.e. $\gamma$. Conversely, assume that $g$ is Borel nonnegative, that for any test plan $\ppi$ the map 
$t\mapsto f(\gamma_t)$ is $W^{1,1}(0,1)$ and that
$$
\left|\frac{\d}{\d t}f(\gamma_t)\right|\leq g(\gamma_t)|\dot\gamma_t|\qquad\text{a.e. in $(0,1)$}
$$ 
for $\ppi$-a.e. $\gamma$. Then, the fundamental theorem of calculus in $W^{1,1}(0,1)$ gives that $g$ is a weak upper differential of $f$.
}\end{remark}

Weak differentials share with classical differentials many features, in particular the chain rule
\cite[Proposition~5.14]{Ambrosio-Gigli-Savare11}
\begin{equation}\label{eq:chainrule}
|D \phi(f)|_{w,\nn}=\phi'(f)|D f|_{w,\nn}\qquad\text{$\nn$-a.e. in $X$}
\end{equation}
for all $\phi:\R\to\R$ Lipschitz and nondecreasing on an interval containing the image of $f$. 
By convention, as in the classical chain rule, $\phi'(f)$ is arbitrarily defined at all points
$x$ such that $\phi$ is not differentiable at $x$, taking into account the fact that
$|D f|_{w,\nn}=0$ $\nn$-a.e. on this set of points.

The following theorem concerning the change of reference measure will be used later in the paper, for the proof see \cite[Theorem 3.6]{AmbrosioGigliMondinoRajala}.

\begin{theorem}[Change of reference measure]\label{thm:change}
Assume that $\rho=g\mm\in\probt{X}$ with $g\in L^\infty(X,\mm)$ and $\Ch(\sqrt{g})<\infty$.
Then:
\begin{itemize}
\item[(a)] $f\in\calW$ and $|Df|_w\in
L^2(X,\rho)$ imply $f\in\calW_\rho$ and $|D
f|_{w,\rho}=|Df|_w$ $\rho$-a.e. in $X$;
\item[(b)] $\log g\in\calW_\rho$ and $|D\log g|_{w,\rho}=|D g|_w/g$ $\rho$-a.e. in $X$.    
\end{itemize}
\end{theorem}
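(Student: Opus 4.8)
The plan is to split part (a) into the two one‑sided bounds $|Df|_{w,\rho}\le |Df|_w$ and $|Df|_{w,\rho}\ge |Df|_w$ (both $\rho$‑a.e.), to obtain the first immediately from an inclusion of test plans, to obtain the second by a localization‑in‑time argument that is the real point, and finally to deduce (b) from (a) by the chain rule \eqref{eq:chainrule}. For the easy bound: since $g\in L^\infty(X,\mm)$, condition (ii) in Definition \ref{def:TestPlan} shows that a test plan $\ppi$ relative to $\rho$ with $(\e_t)_\sharp\ppi\le C\rho$ also satisfies $(\e_t)_\sharp\ppi\le C\|g\|_\infty\mm$, hence is a test plan relative to $\mm$. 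Therefore the defining inequality for $|Df|_w$, valid $\ppi$‑a.e. along every $\mm$‑test plan, holds $\ppi$‑a.e. along every $\rho$‑test plan; as $|Df|_w\in L^2(X,\rho)$ by hypothesis, this already gives $f\in\calW_\rho$ with $|Df|_{w,\rho}\le|Df|_w$ $\rho$‑a.e. By locality, the remaining content of (a) is the inequality $|Df|_w\le|Df|_{w,\rho}$ on $\{g>0\}$.

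For that I would use $\Ch(\sqrt g)<\infty$ in exactly one place: it gives $\sqrt g\in\calW\cap L^2(X,\mm)$, so by Remark \ref{rem:SobCurve}, for every $\mm$‑test plan $\ppi$ the map $t\mapsto\sqrt{g(\gamma_t)}$ is in $W^{1,1}(0,1)$, hence $t\mapsto g(\gamma_t)$ is continuous, for $\ppi$‑a.e. $\gamma$. Fix such a $\ppi$ and an $\eps>0$. For rationals $0\le a<b\le 1$, restrict $\ppi$ to the set of curves staying in $\{g>\eps\}$ on $[a,b]$ (a Borel set precisely because of that continuity), reparametrize $[a,b]$ affinely onto $[0,1]$, and normalize; the time marginals are now concentrated on $\{g>\eps\}$, where $\mm\le\eps^{-1}\rho$, so this is a genuine test plan relative to $\rho$. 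Applying the weak upper gradient inequality for $|Df|_{w,\rho}$ to it and undoing the reparametrization gives $|f(\gamma_b)-f(\gamma_a)|\le\int_a^b |Df|_{w,\rho}(\gamma_r)\,|\dot\gamma_r|\,\d r$ for $\ppi$‑a.e. such $\gamma$. Letting $a,b$ range over the rationals and $\eps\downarrow0$, and using that $\{t:g(\gamma_t)>\eps\}$ is open, that $t\mapsto f(\gamma_t)$ is already absolutely continuous (from $f\in\calW$), and the Lebesgue differentiation theorem, I would conclude that $\big|\tfrac{\d}{\d t}f(\gamma_t)\big|\le|Df|_{w,\rho}(\gamma_t)\,|\dot\gamma_t|$ for a.e.\ $t\in\{g(\gamma_\cdot)>0\}$ and $\ppi$‑a.e. $\gamma$, while on the complementary set the same bound with $|Df|_w$ holds trivially. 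By the converse part of Remark \ref{rem:SobCurve}, the Borel function that equals $|Df|_{w,\rho}$ on $\{g>0\}$ and $|Df|_w$ on $\{g=0\}$ is then a weak upper gradient of $f$ relative to $\mm$, so minimality of $|Df|_w$ forces $|Df|_w\le|Df|_{w,\rho}$ on $\{g>0\}$, completing (a).

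For (b): applying \eqref{eq:chainrule} with $\phi(t)=t^2$ (Lipschitz on the bounded image of $\sqrt g$) yields $|Dg|_w=2\sqrt g\,|D\sqrt g|_w$ $\mm$‑a.e.; hence $g\in\calW\cap L^\infty(X,\mm)$, $|Dg|_w\in L^2(X,\rho)$ since $\int|Dg|_w^2\,\d\rho\le\|g\|_\infty\int|Dg|_w^2\,\d\mm<\infty$, and
\[
\int\Big(\frac{|Dg|_w}{g}\Big)^2\d\rho=4\int|D\sqrt g|_w^2\,\d\mm=8\,\Ch(\sqrt g)<\infty .
\]
By part (a) applied to $g$, $g\in\calW_\rho$ with $|Dg|_{w,\rho}=|Dg|_w$ $\rho$‑a.e. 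I would then apply the $\rho$‑chain rule \eqref{eq:chainrule} to $\phi_\eps(t):=\log(t\vee\eps)$ in two ways — as $\phi_\eps\circ g$, and as $\max(\cdot,\log\eps)\circ(\log g)$ with $s\mapsto\max(s,\log\eps)$ being $1$‑Lipschitz nondecreasing — and compare the two expressions for $|D\phi_\eps(g)|_{w,\rho}$ to get $\mathbf{1}_{\{g>\eps\}}|D\log g|_{w,\rho}=\mathbf{1}_{\{g>\eps\}}|Dg|_w/g$ $\rho$‑a.e. Since $\rho(\{g>0\})=1$ and $|Dg|_w/g\in L^2(X,\rho)$, letting $\eps\downarrow0$ (a routine closure/monotone‑convergence step for $\calW_\rho$) gives $\log g\in\calW_\rho$ with $|D\log g|_{w,\rho}=|Dg|_w/g$ $\rho$‑a.e.

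The step I expect to be the main obstacle is the reverse inequality in (a): an arbitrary $\mm$‑test plan cannot be converted into a $\rho$‑test plan directly, because $g$ may vanish on sets of positive $\mm$‑measure, and the only remedy is to cut curves into the portions on which $g\ge\eps$ and glue back — a procedure that is meaningful only because $\Ch(\sqrt g)<\infty$ forces $t\mapsto g(\gamma_t)$ to be continuous. The delicate bookkeeping there is the Borel measurability of the cut sets and the passage from rational endpoints to the connected components of $\{g(\gamma_\cdot)>\eps\}$, both of which are handled by that continuity.
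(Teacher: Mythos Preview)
The paper does not prove this theorem; immediately before the statement it says ``for the proof see \cite[Theorem~3.6]{AmbrosioGigliMondinoRajala}'', and no argument is supplied. So there is no in-paper proof to compare your attempt against.

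That said, your sketch is the standard route and is essentially correct. Two small points worth tightening. First, when you invoke minimality of $|Df|_w$ against the glued function $G:=|Df|_{w,\rho}\mathbf 1_{\{g>0\}}+|Df|_w\mathbf 1_{\{g=0\}}$, you should note that the already-established easy inequality $|Df|_{w,\rho}\le|Df|_w$ $\rho$-a.e.\ forces $G\le|Df|_w$ $\mm$-a.e., so $G\in L^2(X,\mm)$ and the minimality of $|Df|_w$ among $L^2$ weak upper gradients genuinely applies. Second, in (b) the ``two ways'' chain-rule comparison (writing $\phi_\eps(g)$ as $\max(\cdot,\log\eps)\circ\log g$) presupposes $\log g\in\calW_\rho$, which is precisely what you are proving; the clean logical order is to first obtain $\log g\in\calW_\rho$ with $|D\log g|_{w,\rho}\le|Dg|_w/g$ from the $L^2(\rho)$-closure of $(\phi_\eps(g))_{\eps\downarrow 0}$, and only then run the two-way comparison to upgrade the inequality to an equality.
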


As mentioned in the introduction, a  fundamental object is the \emph{Cheeger energy} defined for a measurable function $f:X \to \R$ as in \eqref{CE} above.
The domain of the Cheeger energy in $L^2(X,\mm)$ is by definition the space of Sobolev functions $W^{1,2}(X,\sfd,\mm)$. Notice that,  endowed with the norm
$$\|f\|_{W^{1,2}}^2:= \|f\|_{L^2}+ 2 \Ch(f), $$
$W^{1,2}(X,\sfd,\mm)$
is a Banach space, but in general it is not a Hilbert space. If it is a Hilbert space then the m.m.s. $(X,\sfd,\mm)$ is said \emph{infinitesimally Hilbertian}; for instance a smooth Finsler manifold is infinitesimally Hilbertian if and only if it is actually a Riemannian manifold. Let us recall that infinitesimal Hilbertianity has proved to be a very useful assumption both from the analytic point of view (see for instance \cite{Ambrosio-Gigli-Savare11b}, \cite{Gigli12} and \cite{GM12}) and from the geometric one (for instance in \cite{MonAng} the second named author defined a notion of angle in such spaces). 
The powerful fact of infinitesimally Hilbertian spaces is that not only a weak notion of modulus of the differential is defined, but also a scalar product between weak differentials can be introduced. We refer to Section 4.3 in \cite{Ambrosio-Gigli-Savare11b} for more details. Here, we just recall some basic facts. The scalar product $Df \cdot Dg$ for $f,g \in D(\Ch)$ is defined as the limit in $L^1(X,\mm)$ as $\varepsilon \downarrow 0$ of 
$$
Df \cdot Dg = \lim_{\varepsilon \downarrow 0} \frac{1}{2\varepsilon} \left( |D(f+\varepsilon g)|_w^2 - |Df)|^2_w\right).
$$ 
Moreover, the map $D(\Ch)^2\ni (f,g)\mapsto Df \cdot Dg \in L^1(X,\mm)$ is bilinear, symmetric, and satisfies the Cauchy-Schwarz inequality
$$|Df \cdot Dg|\leq |Df|_w |Dg|_w.$$

A basic approximation result (see Theorem 6.2 in \cite{Ambrosio-Gigli-Savare11})  states that for $f \in L^2(X,\mm)$ the Cheeger energy can also be
obtained by a relaxation procedure:
$$\Ch(f)=  \inf \{\liminf_{n\to \infty} \frac 1 2 \int_X |D f_n|_w^2 \, \d \mm \}, $$
where the infimum is taken over all sequences of Lipschitz functions $(f_n)$ converging to $f$ in
$L^2(X,\mm)$ and where $|D f_n|$ denotes the local Lipschitz constant (called also slope). In particular, Lipschitz functions are dense in $W^{1,2}(X,\sfd,\mm)$.
It turns out that $\Ch$ is a convex and lowersemicontinuous functional on $L^2(X,\mm)$. Therefore, one can define the Laplacian $-\Delta f \in L^2(X,\mm)$ of a function $f\in W^{1,2}(X,\sfd,\mm)$ has the element of minimal $L^2$-norm in the subdifferential $\partial^- \Ch(f)$, provided the latter is non empty.
Observe that, in general, the Laplacian is a non linear operator and it is linear if and only if $(X,\sfd,\mm)$ is infinitesimally Hilbertian (see for instance \cite{Gigli12}). 
 
\
 
Applying the classical theory of gradient flows of  convex functionals in Hilbert spaces (see for instance \cite{Ambrosio-Gigli-Savare08} for a comprehensive presentation) one can study the gradient flow of $\Ch$ in the space $L^2(X,\mm)$. More precisely one obtains that for every $f \in L^2(X,\mm)$ there exists a continuous curve $(f_t)_{ \in [0,\infty)}$ in $L^2(X,\mm)$, locally absolutely continuous in $(0, \infty)$ with $f_0=f$ such that $\frac{d}{dt} f_t= \partial^- \Ch(f_t)$ for a.e. $t>0$. In fact we have
$$f_t \in D(\Delta) \quad \text{ and } \quad  \frac{\d^+}{\d t} f_t = \Delta f_t \quad, \quad \forall t>0. $$ 
This produces a semigroup $(\H_t)_{t\geq 0}$ on $L^2(X,\mm)$ defined by $\H_t f= f_t$, where $f_t$ is the unique $L^2$-gradient flow of $\Ch$.

An important property of the heat flow is the maximum (resp. minimum) principle, see   \cite[Theorem 4.16]{Ambrosio-Gigli-Savare11}: if$f\in L^2(X,\mm)$ satisfies $f \leq C$ $\mm$-a.e. (resp. $f \geq C$ $\mm$-a.e.), then also $\H_t f \leq C$ $\mm$-a.e.  (resp. $\H_t f \geq C$ $\mm$-a.e.) for all $t \geq 0$. Moreover the heat flow preserves the mass:  for every $f \in L^2(X,\mm)$ 
$$\int_X \H_t f \, \d \mm=\int_X f \, \d \mm, \quad \forall t \geq 0.$$ 

Recall also that if $\Ch$ is quadratic, or in other words $(X,\sfd,\mm)$ is infinitesimally Hilbertian, then ${\cal E}(f,f):=\Ch(f)$ is a strongly local Dirichlet form on $L^2(X,\mm)$ with domain $D({\cal E})=W^{1,2}(X,\sfd,\mm)$. In this case, $\H_t$ is a semigroup of  selfadjoint linear operators on $L^2(X,\mm)$ with the Laplacian $\Delta$ as generator. Moreover, for $f \in W^{1,2}(X,\sfd,\mm)$ and $g\in W^{1,2}(X,\sfd,\mm)\cap D(\Delta)$ we have the integration by parts formula
$$\int_X Df \cdot Dg \, \d \mm= -\int_X f \, \Delta g \, \d \mm.$$

   
\subsection{Lower Ricci curvature bounds}\label{Subsec:CB} 

In the sequel we briefly recall those basic definitions and properties of spaces with lower Ricci curvature bounds that we will need later on.

For $\mu_0,\mu_1 \in \probt X$ the quadratic transportation distance $W_2(\mu_0,\mu_1)$ is defined by
\begin{equation}\label{eq:Wdef}
  W_2^2(\mu_0,\mu_1) = \inf_\sggamma \int_X \sfd^2(x,y) \,\d\ggamma(x,y),
\end{equation}
where the infimum is taken over all $\ggamma \in \prob{X \times X}$ with $\mu_0$ and $\mu_1$ as the first and the second marginal.
Assuming the space $(X,\sfd)$ to be geodesic, also the space $(\probt X, W_2)$ is geodesic. It turns out that any geodesic $(\mu_t) \in \geo(\probt X)$ can be lifted to a measure $\ppi \in \prob{\geo(X)}$, so that $(\e_t)_\#\ppi = \mu_t$ for all $t \in [0,1]$. Given $\mu_0,\mu_1\in\probt X$, we denote by $\gopt(\mu_0,\mu_1)$ the space of all
$\ppi \in \prob{\geo(X)}$ for which $(\e_0,\e_1)_\#\ppi$ realizes the minimum in \eqref{eq:Wdef}. If $(X,\sfd)$ is geodesic, then the set $\gopt(\mu_0,\mu_1)$ is non-empty for any $\mu_0,\mu_1\in\probt X$.

We turn to the formulation of the $\CD^*(K,N)$ condition, coming from  \cite{BS2010}. We refer to this source also for a detailed discussion of the relation of the $\CD^*(K,N)$ with the $\CD(K,N)$ condition previously introduced by Lott-Villani \cite{Lott-Villani09} and Sturm \cite{Sturm06II} (for recent development about the relations between $\CD(K,N)$ and $\CD^*(K,N)$ see also \cite{Cavalletti-Sturm12} and \cite{Cavalletti12}). Here, we recall that $\CD(K,N)$ implies $\CD^*(K,N)$, and that $\CD^*(K,N)$ implies $\CD(K^*,N)$ for $K^*=\frac{K(N-1)}{N}$. 

Given $K \in \R$ and $N \in [1, \infty)$, we define the distortion coefficient $[0,1]\times\R^+\ni (t,\theta)\mapsto \sigma^{(t)}_{K,N}(\theta)$ as
\[
\sigma^{(t)}_{K,N}(\theta):=\left\{
\begin{array}{ll}
+\infty,&\qquad\textrm{ if }K\theta^2\geq N\pi^2,\\
\frac{\sin(t\theta\sqrt{K/N})}{\sin(\theta\sqrt{K/N})}&\qquad\textrm{ if }0<K\theta^2 <N\pi^2,\\
t&\qquad\textrm{ if }K\theta^2=0,\\
\frac{\sinh(t\theta\sqrt{K/N})}{\sinh(\theta\sqrt{K/N})}&\qquad\textrm{ if }K\theta^2 <0.
\end{array}
\right.
\]
\begin{definition}[Curvature dimension bounds]
Let $K \in \R$ and $ N\in[1,  \infty)$. We say that a m.m.s.  $(X,\sfd,\mm)$
 is a $\CD^*(K,N)$-space if for any two measures $\mu_0, \mu_1 \in \prob X$ with support  bounded and contained in $\supp(\mm)$ there
exists a measure $\ppi \in \gopt(\mu_0,\mu_1)$ such that for every $t \in [0,1]$
and $N' \geq  N$ we have
\begin{equation}\label{eq:CD-def}
-\int\rho_t^{1-\frac1{N'}}\,\d\mm\leq - \int \sigma^{(1-t)}_{K,N'}(\sfd(\gamma_0,\gamma_1))\rho_0^{-\frac1{N'}}+\sigma^{(t)}_{K,N'}(\sfd(\gamma_0,\gamma_1))\rho_1^{-\frac1{N'}}\,\d\ppi(\gamma)
\end{equation}
where for any $t\in[0,1]$ we  have written $(\e_t)_\sharp\ppi=\rho_t\mm+\mu_t^s$  with $\mu_t^s \perp \mm$. If in addition $(X,\sfd,\mm)$ is infinitesimally Hilbertian, then we say that it is an $\RCD^*(K,N)$-space.
\end{definition}

One of the main achievements of the work of Erbar-Kuwada-Sturm  \cite{Erbar-Kuwada-Sturm13} (and of the independent and slightly subsequent work \cite{AMS} of the second named author in collaboration with Ambrosio and Savar\'e) is the following theorem asserting  that the $\RCD^*(K,N)$ condition is equivalent to the dimensional Bochner inequality, called also $\BE(K,N)$ condition.

\begin{theorem}[$\RCD^*(K,N)$ is equivalent to $\BE(K,N)$]
Let $(X,\sfd,\mm)$ be an infinitesimally Hilbertian m.m.s. Then, $(X,\sfd,\mm)$ is a $\RCD^*(K,N)$-space if and only if for all $f \in D(\Delta)$ with $\Delta f \in W^{1,2}(X,\sfd,\mm)$ and all $\varphi \in D(\Delta)$ bounded and non-negative with $\Delta \varphi \in L^\infty(X,\mm)$  we have
$$\int \frac 1 2 \Delta \varphi \, |D f|_w^2 \, \d \mm- \int \varphi \, D(\Delta f)\cdot Df \, \d \mm \geq K \int \varphi |D f|_w^2 \, \d \mm + \frac{1}{N} \int \varphi (\Delta f)^2 \, \d \mm. $$
\end{theorem}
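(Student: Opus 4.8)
The plan is to establish the two implications separately, following the circle of ideas of Erbar--Kuwada--Sturm \cite{Erbar-Kuwada-Sturm13} and of Ambrosio, Savar\'e and the second author \cite{AMS}.

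\textbf{From $\RCD^*(K,N)$ to $\BE(K,N)$.} Under infinitesimal Hilbertianity the Cheeger energy is a quadratic, strongly local Dirichlet form, so $\H_t$ is a linear self-adjoint Markov semigroup generated by $\Delta$; by the identification theorems for the heat flow \cite{Ambrosio-Gigli-Savare11}, \cite{AGSBaEm} it coincides with the unique $\mathrm{EVI}$-gradient flow of the relative entropy $\mathrm{Ent}_\mm$ in $(\probt X,W_2)$, and Kuwada's duality between $W_2$-contraction and gradient estimates is available. The first step is to recast the metric hypothesis \eqref{eq:CD-def} in ``entropic'' form, i.e.\ as the condition $\CD^{\mathrm e}(K,N)$: the functional $U_N:=\exp(-\mathrm{Ent}_\mm/N)$ satisfies, along every $W_2$-geodesic $(\mu_t)$ joining measures with bounded density,
\begin{equation*}
U_N(\mu_t)\ \ge\ \sigma^{(1-t)}_{K,N}\big(W_2(\mu_0,\mu_1)\big)\,U_N(\mu_0)\,+\,\sigma^{(t)}_{K,N}\big(W_2(\mu_0,\mu_1)\big)\,U_N(\mu_1).
\end{equation*}
Combining this $(K,N)$-convexity of the entropy with the $\mathrm{EVI}$-gradient-flow structure of $\H_t$ one derives, by a Bakry--Ledoux type argument, a \emph{dimensional} gradient estimate: for $f$ in a suitable dense class,
\begin{equation*}
|D\H_t f|_w^2+c_{N,K}(t)\,(\Delta\H_t f)^2\ \le\ e^{-2Kt}\,\H_t\big(|Df|_w^2\big)\qquad\mm\text{-a.e.},
\end{equation*}
with $c_{N,K}(t)>0$ explicit and $c_{N,K}(t)=\tfrac{2t}{N}+o(t)$ as $t\downarrow 0$. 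Taylor-expanding this inequality at $t=0$ — multiplying by a test function $\varphi\ge 0$ with $\Delta\varphi\in L^\infty$, integrating, and using $\tfrac{d}{dt}\H_t g\big|_{t=0}=\Delta g$, self-adjointness of $\H_t$, integration by parts and the chain rule for $|D\cdot|_w$ — the coefficient of $t$ reorganizes exactly into the integrated Bochner inequality of the statement; the $e^{-2Kt}$ factor produces the $K\,|Df|_w^2$ term and $c_{N,K}$ the $\tfrac1N(\Delta f)^2$ term.

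\textbf{From $\BE(K,N)$ to $\RCD^*(K,N)$.} This is the substantial direction. First I would invoke Savar\'e's self-improvement of the weak Bochner inequality \cite{Savare2013}, which upgrades $\BE(K,N)$ to a measure-valued Bochner inequality carrying in addition the ``iterated carr\'e du champ'' (Hessian-type) term. With this one runs the Bakry--\'Emery--Ledoux semigroup interpolation: for fixed $t>0$ and admissible $f$, study $s\mapsto\H_s\big(|D\H_{t-s}f|_w^2\big)$, differentiate it twice, insert the improved Bochner inequality together with a Cauchy--Schwarz bound, and compare with the solution of the resulting scalar ODE; this yields the dimensional gradient estimate displayed above. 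From that estimate Kuwada's duality lemma gives a dimensional $W_2$-contraction of $\H_t$, equivalently the evolution variational inequality $\mathrm{EVI}_{K,N}$ for $\mathrm{Ent}_\mm$; since $\H_t$ is precisely the gradient flow of the entropy and $\mathrm{EVI}_{K,N}$ encodes $(K,N)$-geodesic convexity of $\mathrm{Ent}_\mm$ in the strong $\mathrm{EVI}$ sense, one recovers the entropic condition $\CD^{\mathrm e}(K,N)$. Using that such spaces are essentially non-branching, so that $\CD^{\mathrm e}(K,N)$ and $\CD^*(K,N)$ are equivalent (cf.\ the argument of Rajala--Sturm \cite{RajalaSturm}), and that infinitesimal Hilbertianity is in the hypotheses, one concludes $\RCD^*(K,N)$.

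\textbf{Main obstacle.} The hard part is the second implication, and within it the passage from the ``Eulerian'' level (a pointwise gradient estimate for $\H_t$, read off from Bochner) back to the ``Lagrangian'' level (sharp convexity of $\mathrm{Ent}_\mm$ along $W_2$-geodesics, i.e.\ $\CD^*(K,N)$): extracting precise geodesic information from an estimate on the heat flow forces one through the full $\mathrm{EVI}_{K,N}$ machinery and the delicate equivalence $\CD^{\mathrm e}(K,N)\Leftrightarrow\CD^*(K,N)$ via essential non-branching. A second, more technical difficulty is the careful bookkeeping of function classes and domains throughout: the Bochner inequality is given only in integrated/weak form for restricted $f$ and $\varphi$, so each formal semigroup computation — the double differentiation in the interpolation, the $t\downarrow 0$ Taylor expansion, the integrations by parts — has to be justified by approximation and by the fine regularity properties of $\H_t$ recalled in the introduction.
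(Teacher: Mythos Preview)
The paper does not actually prove this theorem: it is stated in the preliminaries as a known result, attributed explicitly to Erbar--Kuwada--Sturm \cite{Erbar-Kuwada-Sturm13} and to \cite{AMS}, and no proof is given. So there is no ``paper's own proof'' to compare your proposal against.

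That said, your sketch is a faithful high-level summary of the strategy of \cite{Erbar-Kuwada-Sturm13}: the passage through the entropic condition $\CD^{\mathrm e}(K,N)$, the dimensional Bakry--Ledoux gradient estimate, the $\mathrm{EVI}_{K,N}$ formulation via Kuwada-type duality, and the equivalence $\CD^{\mathrm e}(K,N)\Leftrightarrow\CD^*(K,N)$ on essentially non-branching spaces are exactly the nodes of their argument. One small caution: in the direction $\BE(K,N)\Rightarrow\RCD^*(K,N)$ you should not simply \emph{assume} essential non-branching to close the loop; in \cite{Erbar-Kuwada-Sturm13} this is obtained as a consequence of the $\mathrm{EVI}_{K,N}$ property itself (which forces the $\RCD(K,\infty)$ structure and hence, via \cite{RajalaSturm}, essential non-branching), so the logical order matters. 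Apart from that, your outline is correct in spirit, but since the present paper only \emph{quotes} the result, the appropriate thing here is to cite \cite{Erbar-Kuwada-Sturm13} and \cite{AMS} rather than reproduce an argument.
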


In \cite{Savare2013}, Savar\'e proved a very important self-improvement property of the $\BE(K,\infty)$ condition. His arguments (in particular  Lemma 3.2 in \cite{Savare2013}) applied to  the finite dimensional $\BE(K,N)$ above give the following theorem, which will be very useful in the sequel of the paper. Before stating it let us denote with $\M_\infty$ the set of the functions $u \in W^{1,2}(X,\sfd,\mm)\cap L^\infty(X,\mm)$ for which there exists a measure $\mu= \mu^+-\mu^-$ with  $\mu_{\pm}\in W^{1,2}(X,\sfd,\mm)'_+$, the positive dual space to the Sobolev functions, such that
$$-\int_{X} Du \cdot D\varphi \, \d \mm= \int_X \varphi \, \d \mu \quad \forall \varphi \in W^{1,2}(X,\sfd,\mm). $$ 
For every $u \in \M_\infty$ we set $\Delta^*u := \mu$.

\begin{theorem}[Self-improvement of $\BE(K,N)$]
An infinitesimally Hilbertian m.m.s. $(X,\sfd,\mm)$ is a  $\RCD^*(K,N)$-space if and only if the following holds: for every $f \in L^\infty(X)\cap \LIP(X)\cap D(\Delta)$ with $\Delta f \in W^{1,2}(X,\sfd,\mm)$ we have $|D f|_w^2 \in \M_\infty$ and
$$\frac{1}{2} \Delta^* |D f|_w^2- D f \cdot D (\Delta f) \geq K |D f|_w^2 \mm + \frac{1}{N} \left( \Delta f\right)^2 \mm.$$
 \end{theorem}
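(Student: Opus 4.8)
The plan is to follow the self-improvement scheme of Savar\'e \cite{Savare2013}, the main point being that the extra dimensional term $\frac1N(\Delta f)^2$ only reinforces the right-hand side and can be carried unchanged through every step of that argument. I would treat the two implications separately.

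\emph{The ``if'' direction.} Assuming that for every $f\in L^\infty(X)\cap\LIP(X)\cap D(\Delta)$ with $\Delta f\in W^{1,2}(X,\sfd,\mm)$ one has $|Df|_w^2\in\M_\infty$ together with the stated measure inequality, I would fix such an $f$ and a test function $\varphi\in D(\Delta)$, bounded, non-negative, with $\Delta\varphi\in L^\infty(X,\mm)$, and integrate the measure inequality against $\varphi$. Using the definition of $\Delta^*$ followed by the integration-by-parts formula one has $\int_X\varphi\,\d\Delta^*|Df|_w^2=-\int_X D\varphi\cdot D|Df|_w^2\,\d\mm=\int_X\Delta\varphi\,|Df|_w^2\,\d\mm$, and this returns exactly the integrated $\BE(K,N)$ inequality for $f$. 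To reach all $f\in D(\Delta)$ with $\Delta f\in W^{1,2}(X,\sfd,\mm)$, as needed to invoke the characterization $\RCD^*(K,N)\Leftrightarrow\BE(K,N)$, I would approximate $f$ by $\H_\eps(f_R)$ with $f_R$ the truncation of $f$ at level $R$: these functions lie in $L^\infty(X)\cap\LIP(X)\cap D(\Delta)$ with $\Delta(\H_\eps f_R)\in D(\Delta)\subseteq W^{1,2}(X,\sfd,\mm)$ thanks to the $L^\infty$-to-Lipschitz and analytic-semigroup regularizing properties of $\H_t$, and letting $R\to\infty$ and then $\eps\to0$ all the terms converge in $L^2$ or $W^{1,2}$, so $\BE(K,N)$ passes to the limit. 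This would give that $(X,\sfd,\mm)$ is $\RCD^*(K,N)$.

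\emph{The ``only if'' direction.} Here I would start from $\RCD^*(K,N)$, hence from the integrated $\BE(K,N)$ inequality, fix $f\in L^\infty(X)\cap\LIP(X)\cap D(\Delta)$ with $\Delta f\in W^{1,2}(X,\sfd,\mm)$, and set $G:=|Df|_w^2$, which lies in $L^\infty(X,\mm)$ since $f$ is Lipschitz. The first and crucial task is to show $G\in\M_\infty$: testing $\BE(K,N)$ with functions of the form $\varphi\,\psi(f)$ and combining the resulting inequalities through the chain and Leibniz rules of the Dirichlet form, one controls the functional $\varphi\mapsto\int_X D\varphi\cdot DG\,\d\mm$ from both sides by expressions of the form $\int_X|\varphi|\,h\,\d\mm$ with $h\in L^1(X,\mm)$; this gives first $G\in W^{1,2}(X,\sfd,\mm)$ and then, by a Riesz-type representation on $W^{1,2}(X,\sfd,\mm)'_+$, the existence of $\Delta^*G$ as a signed measure, i.e. $G\in\M_\infty$. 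Feeding the same family of integral inequalities into \cite[Lemma~3.2]{Savare2013}, now with the Bochner functional carrying the extra summand $\frac1N(\Delta f)^2$ throughout, one upgrades the estimate to the pointwise inequality $\frac12\Delta^*G-Df\cdot D(\Delta f)\ge K\,G\,\mm+\frac1N(\Delta f)^2\,\mm$, which is the claim.

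The hard part is entirely in the ``only if'' direction: proving that $|Df|_w^2$ has a measure-valued distributional Laplacian without any second-order regularity available, and justifying the calculus with this a priori merely distributional object. This is exactly the technical core of \cite{Savare2013}, and \cite[Lemma~3.2]{Savare2013} is the tool designed to handle it; I would cite and adapt it rather than reprove it. By contrast the finite-dimensional refinement should come essentially for free: the additional term $\frac1N(\Delta f)^2\,\mm\ge0$ only strengthens each inequality and survives every passage to the limit, so no new estimate is needed to accommodate it. The only other, routine, point to be careful about is the truncation-plus-heat-flow approximation in the ``if'' direction, where one checks stability of $\BE(K,N)$ under the convergences produced by $\H_\eps(f_R)\to f$.
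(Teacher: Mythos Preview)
Your proposal is correct and matches the paper's own treatment: the paper does not give a self-contained proof of this theorem but simply states that Savar\'e's arguments in \cite{Savare2013}, and in particular \cite[Lemma~3.2]{Savare2013}, carry over to the finite-dimensional $\BE(K,N)$ setting. Your sketch is a faithful (and in fact more detailed) unpacking of exactly that strategy, including the key observation that the additional non-negative term $\tfrac{1}{N}(\Delta f)^2\,\mm$ passes unchanged through each step of Savar\'e's self-improvement scheme.
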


For every $f \in L^\infty(X)\cap \LIP(X)\cap D(\Delta)$ with $\Delta f \in W^{1,2}(X,\sfd,\mm)$ we denote with $\Gamma^*_2(f)$ the finite Borel measure
\begin{equation}\label{eq:ImprBE(K,N)}
\Gamma^*_2(f):=\frac{1}{2} \Delta^* |D f|_w^2- D f \cdot D (\Delta f).
\end{equation}
Analogously to Lemma 2.6 in \cite{Savare2013} (see also page 12 of the same paper)  $\Gamma^*_2(f)$ has finite total variation. 

Recall also that thanks to the Bishop-Gromov property proved by Lott-Villani \cite{Lott-Villani09}   and Sturm \cite{Sturm06II} for $\CD(K,N)$-spaces, and the proof of a weak local Poincar\'e inequality for  $\CD(K,N)$-spaces by Lott-Villani \cite{LV2007} and Rajala \cite{R2011}, the  $\RCD^*(K,N)$-spaces are doubling and Poincar\'e as well. 

\

We close this subsection by discussing the geodesic structure of $(\probt{X},W_2)$ (see
\cite[Theorem~2.10]{Ambrosio-Gigli11} or \cite{Lisini07}) and the existence of good geodesics in $\CD^*(K,N)$-spaces (see \cite{R2012}).
If $\mu_0, \,\mu_1\in\probt X$ are connected by a constant speed geodesic $\mu_t$ in $(\probt X, W_2)$,
then there exists $\ppi \in \prob{\geo(X)}$ with $(\e_t)_\sharp\ppi = \mu_t$ for all $t\in [0,1]$
and
\[
 W_2^2(\mu_s,\mu_t) = \int_{\geo(X)}\sfd^2(\gamma_s,\gamma_t)\,\d\ppi(\gamma)=
 (s-t)^2\int_{\geo(X)}\ell^2(\gamma)\,\d\ppi(\gamma)\qquad\forall s,\,t\in [0,1],
\]
where $\ell(\gamma)=\sfd(\gamma_0,\gamma_1)$ is the length of the geodesic $\gamma$.
The collection of all the measures $\ppi$ with the above properties is denoted by
$\gopt(\mu,\nu)$.
The measure $\ppi$ is not uniquely determined by $\mu_t$, unless $(X,\sfd)$ is non-branching (the uniqueness of the lifting $\ppi$ in $\RCD^*(K,N)$-spaces is ensured   by \cite{Gigli12b} and \cite{RajalaSturm}), while
the relation between optimal geodesic plans and optimal Kantorovich plans is given by the fact that
$\gamma:=(\e_0,\e_1)_\sharp\ppi$ is optimal whenever $\ppi\in\gopt(\mu,\nu)$. We conclude by recalling a result of Rajala \cite[Thorem 1.2]{R2012} that we will use in the sequel.
\begin{theorem}[Improved Geodesics in $\CD^*(K,N)$-spaces] \label{thm:ImprGeod}
Let $(X, \sfd, \mm)$ be a $\CD^*(K,N)$-space for some $K\in\R$ and $N\in(1,\infty)$. Then for every couple of absolutely continuous probability measures  $\mu_0=\rho_0 \mm$, $\mu_1= \rho_1 \mm $  with bounded densities and bounded supports there exists $\ppi \in \gopt(\mu_0,\mu_1)$ such that
\\1)  $\ppi$ is a test plan in the sense of Definition \ref{def:TestPlan}; more precisely, called  $D = \diam (\supp(\mu_0) \cup \supp(\mu_1)) < \infty$ and 
 $\rho_t \mm:=\mu_t:=(\e_t)_\sharp (\ppi)$,  one has the density upper-bound
$$ \|\rho_t\|_{L^\infty(X,\mm)} \leq e^{\sqrt{K-N}D} \max\{\|\rho_0\|_{L^\infty(X,\mm)}, \|\rho_1\|_{L^\infty(X,\mm)}\}\quad .$$
\\2) $(\mu_t)$ satisfies the convexity property \eqref{eq:CD-def}.
\end{theorem}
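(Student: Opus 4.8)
The plan is to reduce the whole statement to a single \emph{midpoint estimate} and then iterate it dyadically. Concretely, I would first prove the following: for absolutely continuous $\nu_0=\eta_0\mm$, $\nu_1=\eta_1\mm$ with bounded densities and with $\diam(\supp\nu_0\cup\supp\nu_1)\le D$, there is an optimal plan whose midpoint $\nu_{1/2}=(\e_{1/2})_\sharp\ppi$ is absolutely continuous, with density $\eta_{1/2}$ satisfying
\[
\|\eta_{1/2}\|_{L^\infty}\le \big(2\,\sigma^{(1/2)}_{K,N}(D)\big)^{-N}\,\max\{\|\eta_0\|_{L^\infty},\|\eta_1\|_{L^\infty}\}.
\]
The prefactor here is a finite constant depending only on $K,N,D$: since $2\,\sigma^{(1/2)}_{K,N}(\theta)\ge 1$ for $K\ge 0$ (so the factor is $\le 1$, and $=1$ when $K=0$), while $2\,\sigma^{(1/2)}_{K,N}(\theta)=1/\cosh(\tfrac{\theta}{2}\sqrt{|K|/N})\ge e^{-\frac{\theta}{2}\sqrt{|K|/N}}$ for $K<0$, the factor is bounded by the exponential-in-$D$ expression of the form stated in the theorem. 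This is the source of the curvature-and-diameter dependent constant.

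\emph{Core mechanism of the midpoint estimate.} I would start from a plan $\ppi\in\gopt(\nu_0,\nu_1)$ satisfying the convexity inequality \eqref{eq:CD-def}, whose existence is exactly the $\CD^*(K,N)$ hypothesis. Fix a level $\ell$, set $A=\{\eta_{1/2}>\ell\}$, $\beta=\nu_{1/2}(A)$, and assume $\beta>0$ for contradiction. Consider the normalized restriction $\ppi_A=\beta^{-1}\,\ppi\res\e_{1/2}^{-1}(A)$, with endpoints $\bar\nu_i=(\e_i)_\sharp\ppi_A$ and midpoint $\beta^{-1}\,\nu_{1/2}\res A$. Because $\bar\nu_i\le\beta^{-1}\nu_i$, the endpoint densities are at most $\beta^{-1}M$ with $M=\max\{\|\eta_0\|_{L^\infty},\|\eta_1\|_{L^\infty}\}$, while the midpoint density exceeds $\beta^{-1}\ell$ on $A$. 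Writing \eqref{eq:CD-def} for $\ppi_A$ with $t=\tfrac12$, $N'=N$, bounding $\sigma^{(1/2)}_{K,N}(\sfd(\gamma_0,\gamma_1))\ge\sigma^{(1/2)}_{K,N}(D)$ on the support, the right-hand side from below by $2\,\sigma^{(1/2)}_{K,N}(D)\,(\beta^{-1}M)^{-1/N}$ and the left-hand side from above by $(\beta^{-1}\ell)^{-1/N}$ (the midpoint is a probability measure supported where its density exceeds $\beta^{-1}\ell$), the common power $\beta^{1/N}$ cancels and one is left with $\ell^{-1/N}\ge 2\,\sigma^{(1/2)}_{K,N}(D)\,M^{-1/N}$. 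Hence $A$ can carry mass only when $\ell\le(2\,\sigma^{(1/2)}_{K,N}(D))^{-N}M$, which is the asserted bound; applying the same truncation to a hypothetical singular part of $\nu_{1/2}$ forces it to vanish, giving absolute continuity.

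\emph{Main obstacle.} The delicate step is writing \eqref{eq:CD-def} for $\ppi_A$: the curvature-dimension condition is an \emph{integrated} inequality for one plan, and restricting and renormalizing need not preserve it, since in a possibly branching $\CD^*(K,N)$-space the conditional densities along geodesics are not controlled pointwise — essential non-branching, which would make the localization automatic, is not available at this stage (it is itself proved later, using precisely results of this type). Showing that \eqref{eq:CD-def} does localize to the sub-plan $\ppi_A$ is the genuinely nontrivial point; I would treat it as in \cite{R2012}, exploiting that \eqref{eq:CD-def} is required for all $N'\ge N$ and choosing the conditioned transport so that the restricted inequality is inherited. This is the only hard part of the argument.

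\emph{Iteration and passage to the limit.} Granting the midpoint estimate, I would construct the geodesic plan by dyadic bisection: insert a midpoint on $[0,1]$, then on $[0,\tfrac12]$ and $[\tfrac12,1]$, and so on, applying the estimate at each node to the (now absolutely continuous, density-bounded) endpoints produced at the previous stage. Along the path of the dyadic tree reaching a time $t=k/2^n$, the subinterval lengths sum to at most $D$, so the density factors multiply to at most the single exponential factor above (and to $1$ when $K=0$), uniformly in $n$ and $t$; this gives $\|\rho_{k/2^n}\|_{L^\infty}\le e^{cD}\max\{\|\rho_0\|_{L^\infty},\|\rho_1\|_{L^\infty}\}$. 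Gluing the dyadic pieces yields plans $\ppi_n\in\prob{\geo(X)}$ with $(\e_0,\e_1)_\sharp\ppi_n$ optimal and interpolants obeying the uniform bound; since all curves stay in a fixed bounded set and have length $\le D$, the family $\{\ppi_n\}$ is tight, and any weak limit $\ppi$ is a geodesic plan with optimal endpoints. The condition $(\e_t)_\sharp\ppi_n\le C\mm$ is closed under weak convergence, so it passes to $\ppi$ for every dyadic $t$, and then to every $t\in[0,1]$ by $W_2$-continuity of $t\mapsto(\e_t)_\sharp\ppi$ together with density of the dyadics. This is property 1); in particular $\ppi$ is a test plan, since $\int\!\!\int_0^1|\dot\gamma_t|^2\,\d t\,\d\ppi=\int\ell^2\,\d\ppi\le D^2<\infty$ gives finite action and the density bound gives $(\e_t)_\sharp\ppi\le C\mm$. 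Finally, property 2) (the convexity \eqref{eq:CD-def}) holds for $\ppi$ by lower semicontinuity of the R\'enyi entropy functional along the construction, so the same plan satisfies both conclusions.
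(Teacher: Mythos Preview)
The paper does not prove this statement: Theorem~\ref{thm:ImprGeod} is merely \emph{recalled} from Rajala \cite[Theorem~1.2]{R2012} without argument, so there is no in-paper proof to compare your proposal against.

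Assessing your sketch on its own terms: the overall architecture (midpoint density estimate $\Rightarrow$ dyadic iteration $\Rightarrow$ tightness and weak limit) is indeed the shape of Rajala's argument, and your computations with the distortion coefficients $\sigma^{(1/2)}_{K,N}$ are correct. However, the step you flag as the ``main obstacle'' is not a technicality to be handled ``as in \cite{R2012}'' --- it \emph{is} the theorem. Specifically, in your level-set argument you write \eqref{eq:CD-def} for the restricted plan $\ppi_A$, but the $\CD^*(K,N)$ condition only guarantees the existence of \emph{some} optimal plan between $\bar\nu_0$ and $\bar\nu_1$ satisfying the convexity inequality; it says nothing about $\ppi_A$ itself. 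If you instead invoke $\CD^*$ to produce a new plan between $\bar\nu_0$ and $\bar\nu_1$, its midpoint need not coincide with $\beta^{-1}\nu_{1/2}\res A$, and the contradiction evaporates. Making this circularity disappear --- constructing a plan that simultaneously has the right midpoint \emph{and} obeys the entropy inequality, in a space where branching may occur --- is precisely the content of \cite{R2012}, and your proposal does not supply it. So your write-up is a correct outline of the strategy, but the essential difficulty is deferred back to the reference the paper already cites, and in that sense no independent proof has been given.
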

Actually regarding the second statement, Rajala proves the  stronger assertion that the convexity property \eqref{eq:CD-def} holds for all triple of times $0\leq t_1<t_2<t_3\leq 1$ but we will not need this stronger version.

\subsection{Improved regularity of the heat flow in $\RCD^*(K,N)$-spaces}\label{subsec:ImprovedRegHeat}
Thanks to the identification, in $\RCD^*(K,\infty)$-spaces, of the heat flow $\H_t$ in $L^2(X,\mm)$ with the gradient flow ${\cal H}_t$ of the Shannon entropy functional  in the Wasserstein space, in \cite{Ambrosio-Gigli-Savare11b} several regularity properties of $\H_t$ have been deduced. We recall some of them. 
\\When $f \in  L^\infty(X,\mm)$, $\H_t f$ has a continuous representative, denoted by $\tilde{\H}_t f$, which is defined as follows (see Theorem 6.1 in \cite{Ambrosio-Gigli-Savare11b})
\begin{equation}\label{eq:ContRepHt}
\tilde{\H}_t f:=\int_X f \, \d {\cal H}_t(\delta_x).
\end{equation}
Moreover, for each $f \in L^\infty$ the map $(t,x)\mapsto \tilde{\H}_t f(x)$ belongs to $C_b((0,\infty)\times X)$. According to Theorem 6.8 in \cite{Ambrosio-Gigli-Savare11b}, for any $f \in L^\infty(X,\mm)$ we even obtain that  $\tilde{\H}_t f$ is Lipschitz. Finally, the classical Bakry-\'Emery gradient estimate holds (see Theorem 6.2 in \cite{Ambrosio-Gigli-Savare11b})
\begin{equation}\label{eq:BEflow}
|D \H_t f|_w^2 \leq e^{-2Kt} \H_t(|D f|_w^2) \quad \mm\text{-a.e.}
\end{equation}

We stress that all the previous results were established without any upper bound on the dimension. In case of finite dimension one obtains finer properties. For instance, if $(X,\sfd,\mm)$ is a $\RCD^*(K,N)$-space, then $\tilde{\H}_t f$ is Lipschitz and bounded for any $f \in L^1(X,\mm)$; indeed, thanks to Remark 6.4 in \cite{Ambrosio-Gigli-Savare11b}, keeping in mind   that $\RCD^*(K,N)$-spaces are doubling and Poincar\'e, one can show that the semigroup $\H_t$ is regularizing from $L^1(X,\mm)$ to $L^\infty(X,\mm)$.

Let us also recall that thanks to the self adjointness of $\Delta$ in $L^2(X,\mm)$ and the continuity of $\H_t$ as a map of $L^p(X,\mm)$ into itself for every $t\geq 0$ and every $p \in [1,\infty]$, we can apply the classical theory developed by Stein (see Theorem 1 in Chapter III of \cite{Stein}) and infer that $\H_t$ is an analytic semigroup in $L^p(X,\mm)$ for every $p \in (1,\infty)$; more precisely the map $t \mapsto \H_t$ has an analytic extension in the sense that it extends to an analytic $L^p(X,\mm)$-operator-valued function $t+i\tau\mapsto \H_{t+i \tau}$ defined in the sector of the complex plane
 \[
|\arg(t+i \tau)|<\frac{\pi}{2}\left(1-\left|\frac{2}{p}-1 \right| \right).
\]
Observe also that, since by assumption $W^{1,2}(X,\sfd,\mm)$ is a Hilbert space, and $\Ch$ is a convex and continuous functional on $W^{1,2}(X,\sfd,\mm)$, then it admits a unique gradient flow which coincides with the heat flow. If follows that, for every $f\in L^1(X,\mm)$, $t\mapsto \H_t f$ is a locally absolutely continuous curve on $(0,\infty)$ with values in $W^{1,2}(X,\sfd,\mm)$.
\\From the classical theory of semigroups, for every  $f\in D(\Delta)$ one has 
$$\Delta(\H_t f)=\H_t(\Delta f).$$ 
It follows, in particular, that $\Delta (\H_t f) \in W^{1,2}(X,\sfd,\mm)$ for every $t>0$.

Finally in their  recent paper \cite{Erbar-Kuwada-Sturm13}, Erbar-Kuwada-Sturm proved the dimensional Bakry-\'Emery $L^2$-gradient-Laplacian estimate: if $(X,\sfd,\mm)$ is a $\RCD^*(K,N)$-space, then for every $f \in D(\Ch)$ and every $t>0$, one has
$$|D \H_t f|_w^2+\frac{4Kt^2}{N(e^{2Kt}-1)} |\triangle \H_t f|^2 \leq e^{-2Kt} \H_t\left(|Df|_w^2\right).$$

\section{Two fundamental Lemmas}\label{sec:2lemmas}
Throughout the remainder of the paper $(X,\sfd,\mm)$ will be a $\RCD^*(K,N)$-space, for some $N\geq 1$ and $K \in \R$, with $\mm(X)=1$.

First of all, observe that given $f \in L^1(X,\mm)$ with $f\geq \delta>0$ $\mm$-a.e., thanks to the discussion of Subsection \ref{subsec:ImprovedRegHeat}, we already know that $\H_t f \in \LIP(X)$ and $\H_t f\geq \delta$ for every $t>0$; therefore the function $(\H_{T-t} f) \, |D(\log \H_{T-t} f)|^2_w$ is an element of $L^\infty(X,\mm)$ and, for every $T>0$ and $t\in [0,T]$, we can define
\begin{equation}\label{eq:defPhi(t)}
\Phi(t):= \H_t\left( (\H_{T-t} f)  |D(\log \H_{T-t} f)|^2_{w} \right).
\end{equation}
Notice that, for every $t\in[0,T)$, $\Phi(t) \in \LIP(X)$.

Secondly, notice that, given $f\in L^1(X,\mm)$ with $f\geq 0$ and $f \mm \in \probt X$, from the energy dissipation rate (see \cite{Ambrosio-Gigli-Savare11} and \cite{Ambrosio-Gigli-Savare11b}, in particular the estimate (6.2) of the latter) of the Shannon entropy $\int \rho \log \rho \, \d \mm$  and of the Fisher information ${\rm F}(\rho):=8\Ch(\sqrt{\rho})$ along the heat flow we obtain that $(\H_t f) \mm \in \probt X$, $\H_t f \log \H_t f \in L^1(X,\mm)$, and $|D \sqrt{\H_t f}|_w \in L^2(X,\mm)$.   Then, Theorem \ref{thm:change} implies that $\log \H_t f \in \calW_{(\H_t f)\mm}$ the weighted Sobolev space, and
\begin{equation}
|D \log(\H_t f)|_{w,(\H_t f)\mm} = \frac{|D (\H_t f)|_w}{\H_t f} \quad (\H_t f) \mm \text{-a.e.} 
\end{equation}
This last observation will be simply used to write the Li-Yau and Bakry-Qian inequalities in a compact form. 



\begin{lemma}\label{lemma1}
Let $(X,\sfd,\mm)$ be an $\RCD^*(K,N)$-space with $\mm(X)=1$, and let $f \in L^1(X,\mm)$ with $f\geq \delta>0$ $\mm$-a.e. For $0<t<T$ let $\Phi(t)$ be defined in \eqref{eq:defPhi(t)}. Then, for every $\varphi \in L^1(X,\mm)$ the map $[0,T] \to \R$ defined as $t \mapsto \int_X \Phi(t) \, \varphi \, \d\mm$ is  absolutely continuous on $[0,T]$, and
\begin{equation}\label{der}
\frac{d}{dt} \int_X \Phi(t) \, \varphi \, \d\mm = 2 \int_X \H_{T-t}f \, \H_t \varphi \, \d \Gamma_2^*(\log \H_{T-t} f) \quad  \text{for a.e. } t \in [0,T]. 
\end{equation}
\end{lemma}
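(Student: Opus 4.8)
First I would reduce everything to a statement about the scalar semigroup acting on a fixed test function. Fix $T>0$ and $\varphi\in L^1(X,\mm)$; since $\H_t$ is a contraction from $L^1$ to $L^1$ and is self-adjoint on $L^2$, I can write $\int_X \Phi(t)\varphi\,\d\mm = \int_X \big((\H_{T-t}f)\,|D\log\H_{T-t}f|_w^2\big)\,\H_t\varphi\,\d\mm$. Abbreviating $u_s:=\H_s f$ (so $u_s\ge\delta$ and $u_s\in\LIP(X)$ for $s>0$, by Subsection \ref{subsec:ImprovedRegHeat}) and noting $u_s\,|D\log u_s|_w^2 = |Du_s|_w^2/u_s$, the claim becomes: $t\mapsto \int_X \frac{|Du_{T-t}|_w^2}{u_{T-t}}\,\H_t\varphi\,\d\mm$ is absolutely continuous on $[0,T]$ with the asserted derivative. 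The chief ingredients are (a) the curve $s\mapsto\H_s f$ is locally absolutely continuous in $W^{1,2}$ on $(0,\infty)$ with $\frac{d}{ds}\H_s f=\Delta\H_s f$, and $\Delta\H_s f\in W^{1,2}$ for $s>0$ (again Subsection \ref{subsec:ImprovedRegHeat}); (b) the analyticity/smoothing of $\H_t$ so that $\H_t\varphi$, $\Delta\H_t\varphi$ are as regular as needed for $t>0$ even with $\varphi$ only $L^1$; and (c) the self-improved Bochner identity: for $g=\log u_{T-t}\in L^\infty\cap\LIP\cap D(\Delta)$ with $\Delta g\in W^{1,2}$ one has $|Dg|_w^2\in\M_\infty$ and $\Gamma_2^*(g)=\tfrac12\Delta^*|Dg|_w^2 - Dg\cdot D(\Delta g)$ as a finite signed measure.

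Next I would differentiate formally and then justify. Write $F(t):=\int_X \big(u_{T-t}|D\log u_{T-t}|_w^2\big)\,\H_t\varphi\,\d\mm$. Differentiating under the integral sign in $t$ produces two contributions: the "$\H_t$" term, $\int_X \big(u_{T-t}|D\log u_{T-t}|_w^2\big)\,\Delta\H_t\varphi\,\d\mm$, and the "inner" term coming from $\frac{d}{dt}u_{T-t}=-\Delta u_{T-t}$. The first term, by moving $\Delta$ back via integration by parts onto $h:=u_{T-t}|D\log u_{T-t}|_w^2\in W^{1,2}\cap L^\infty$, equals $-\int_X Dh\cdot D(\H_t\varphi)\,\d\mm=\int_X \H_t\varphi\,\d\Delta^* h$ — but more usefully, one recognizes $h = |Du_{T-t}|_w^2/u_{T-t}$ and uses the product/chain rules together with $\Delta^*|Du_{T-t}|_w^2$. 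The second term expands, via the chain rule $|D\log u|_w^2 = |Du|_w^2/u^2$ and the Leibniz rule for $Du\cdot D\Delta u$, into the remaining pieces. The point of the computation is that, after collecting terms, the combination that survives is exactly $2\,u_{T-t}\,\H_t\varphi$ integrated against $\Gamma_2^*(\log u_{T-t})$: this is the metric-measure analogue of the classical identity $\partial_t[\H_t(u_{T-t}|\nabla\log u_{T-t}|^2)] = 2\,\H_t(u_{T-t}\,\Gamma_2(\log u_{T-t}))$ along the heat flow (the "$\Phi'=2\Gamma_2$" computation of Bakry–Ledoux / Baudoin–Garofalo). I would carry out this algebra at the level of the weighted/weak calculus of \cite{Ambrosio-Gigli-Savare11b}, using bilinearity and the Cauchy–Schwarz bound for $Df\cdot Dg$ to control everything in $L^1$, and using that $\Gamma_2^*(\log u_{T-t})$ has finite total variation (as recalled after \eqref{eq:ImprBE(K,N)}).

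For the absolute continuity statement I would argue as follows: on any $[\eta,T-\eta]$ the maps $s\mapsto u_s$ and $s\mapsto \H_s\varphi$ are locally Lipschitz (indeed $C^1$) curves into $W^{1,2}$, hence $t\mapsto F(t)$ is locally Lipschitz on $(0,T)$ with the derivative computed above; the content is then (i) extending continuity/absolute continuity up to the endpoints $t=0$ and $t=T$, and (ii) showing the derivative is integrable on all of $[0,T]$. For the endpoint at $t=0$: $F(0)=\int_X u_T|D\log u_T|_w^2\,\varphi\,\d\mm$ and $F(t)\to F(0)$ follows from strong $W^{1,2}$-continuity of $s\mapsto\H_s f$ at $s=T$ (bounded below, Lipschitz) together with $\H_t\varphi\to\varphi$ in $L^1$; for $t=T$ one uses the smoothing $L^1\to L^\infty$ of $\H_t$ to control $\H_t\varphi$ uniformly as $t\uparrow T$ while $u_{T-t}\downarrow u_0=f$, using also that $h_t:=u_{T-t}|D\log u_{T-t}|_w^2$ stays bounded in $L^1$ (by mass conservation of the heat flow applied to $|Du_s|_w^2/u_s$, or by the Bakry–Émery estimate \eqref{eq:BEflow}). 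Integrability of the derivative on $[0,T]$ reduces to integrability of $t\mapsto \|\H_t\varphi\|_\infty\,|\Gamma_2^*(\log u_{T-t})|(X)$ near the two endpoints, which I expect to handle via the analytic-semigroup bound $\|\H_t\varphi\|_\infty\lesssim t^{-\alpha}\|\varphi\|_1$ on one side and the finiteness of the total variation of $\Gamma_2^*$ (uniform on compact parameter ranges away from $0$) on the other — together with the elementary bound $\int_X\d\Gamma_2^*(g)=\int_X|Dg|_w^2\,\d(\Delta^*1)+\dots$, more simply $\Gamma_2^*(g)(X)\ge \frac1N\int(\Delta g)^2\,\d\mm+K\int|Dg|_w^2\,\d\mm$ via $\BE(K,N)$, giving a one-sided control that suffices once combined with the explicit Li–Yau-type a priori bounds.

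\emph{The main obstacle.} The delicate point is not the formal identity \eqref{der} — that is a bookkeeping exercise in the calculus rules \eqref{eq:chainrule}, the Leibniz rule for $D\cdot D$, and the definition \eqref{eq:ImprBE(K,N)} of $\Gamma_2^*$ — but rather the rigorous differentiation under the integral sign when $\varphi$ is merely $L^1$ and $f$ merely $L^1$ (with $f\ge\delta$): one must verify that $t\mapsto h_t=u_{T-t}|D\log u_{T-t}|_w^2$ is an absolutely continuous curve in a topology in which pairing against $\H_t\varphi$ is continuous, and that the time-derivative of the pairing genuinely splits into the "$\Delta\H_t\varphi$" piece and the "$\partial_t h_t$" piece with no boundary defect. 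This in turn requires that $s\mapsto u_s$ be not just $W^{1,2}$-absolutely continuous but that $s\mapsto|Du_s|_w^2$ be differentiable in $L^1$ with derivative $2\,Du_s\cdot D\Delta u_s$ — which is exactly where the hypothesis $\Delta\H_s f\in W^{1,2}$ for $s>0$ and the self-improvement theorem (so that $\Delta^*|Du_s|_w^2$ is a genuine signed measure of finite mass) are essential. I would isolate this as the technical heart of the proof and treat it by first proving the statement for $\varphi\in L^\infty\cap D(\Delta)$ (where everything is smooth and Fubini/dominated convergence apply directly), establishing the identity \eqref{der} there, and then passing to general $\varphi\in L^1$ by approximation, using that both sides of \eqref{der} are continuous under $\H_{t_0}$-smoothing of $\varphi$ for a fixed small $t_0>0$ together with the $t_0\downarrow0$ limit controlled by the $L^1$-contractivity of $\H_{t_0}$ and the uniform (in $t_0$) $L^1$-bounds on $h_t$.
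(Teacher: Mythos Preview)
Your proposal is correct and follows essentially the same route as the paper: move $\H_t$ onto $\varphi$ by self-adjointness, differentiate the three factors via the product rule (using $\frac{d}{dt}\H_{T-t}f=-\Delta\H_{T-t}f$, the chain-rule formula for $\frac{d}{dt}|D\log\H_{T-t}f|_w^2$, and $\frac{d}{dt}\H_t\varphi=\Delta\H_t\varphi$), integrate the $\Delta\H_t\varphi$ term by parts, and then recognize the resulting four terms as $2\,\H_{T-t}f\,\H_t\varphi\,\d\Gamma_2^*(\log\H_{T-t}f)$ via the chain rule for $\Gamma_2^*$. The paper treats the absolute-continuity and differentiation-under-the-integral issues more briefly than you do (invoking only the analyticity of $\H_t$, the Bakry--\'Emery Lipschitz bound \eqref{eq:BEflow}, and the $W^{1,2}$-absolute continuity of $s\mapsto\H_s f$), and does not pass through an $L^\infty\cap D(\Delta)$ approximation of $\varphi$; your extra care on these points is reasonable but not a different strategy.
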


\begin{proof}
The absolute continuity of $t\mapsto \int \Phi(t) \, \varphi \, \d\mm $  follows  by the smoothness of $t\mapsto \H_t$ as a $L^p$-operator valued map for all $p\in(1,\infty)$, the Lipschitz regularization of the heat flow with the bound \eqref{eq:BEflow}, and the absolute continuity of $t\mapsto \H_t f$ as a curve with values in $W^{1,2}(X,\sfd,\mm)$ (see Subsection \ref{subsec:ImprovedRegHeat}).

We now prove \eqref{der}. Observe that since by minimum principle $\H_{T-t} f\geq \delta$, and moreover $\Delta \H_{T-t} f \in W^{1,2}(X,\sfd,\mm)$, we have by chain rule that the absolutely continuous curve  $[0,T]\mapsto L^1(X,\mm)$ defined by $t\mapsto|D(\log \H_{T-t} f)|_w^2$ satisfies 
\begin{equation}\label{eq:dtlog1}
\frac{d}{dt}|D(\log \H_{T-t} f)|_w^2=2 |D(\log \H_{T-t} f)|_w^2 \, \frac{\Delta  \H_{T-t} f}{ \H_{T-t} f}- 2 D(\log \H_{T-t} f) \, \frac{D(\Delta  \H_{T-t} f)}{ \H_{T-t} f},
\end{equation}
for a.e. $t\in[0,T]$. Using the self-adjointness of the heat flow $\H_t$, and the regularity in $t$ discussed above, for a.e. $t\in[0,T]$ we compute
\begin{eqnarray}
\frac{d}{dt}\int_X \Phi(t) \, \varphi \, \d \mm &=& \frac{d}{dt} \int_X \H_{T-t}f \, |D(\log \H_{T-t} f)|_w^2  \, \H_t\varphi \, \d \mm \nonumber \\
                                                &=& \int_X -\Delta \H_{T-t} f \, |D(\log \H_{T-t} f)|_w^2  \, \H_t\varphi \, \d \mm     \nonumber \\
																								&&  + \int_X  \H_{T-t} f \, \frac{d}{dt}|D(\log \H_{T-t} f)|_w^2  \, \H_t\varphi \, \d \mm    \nonumber \\
																								&&  + \int_X  \H_{T-t} f \, |D(\log \H_{T-t} f)|_w^2  \, \Delta \H_t\varphi \, \d \mm  \nonumber \\
																								&=&  2\int_X 	 |D(\log \H_{T-t} f)|_w^2 \, \Delta  \H_{T-t} f \, \H_t \varphi \, \d \mm \nonumber \\
																								&& -2 \int_X D(\log \H_{T-t} f) \cdot D(\Delta  \H_{T-t} f) \, \H_t \varphi \, \d \mm \nonumber \\
																								&& +2 \int_X  D(\H_{T-t} f) \cdot D\left(|D\log \H_{T-t} f|_w^2\right)  \, \H_t \varphi \, \d \mm \nonumber \\
																								&& + \int_X  \H_{T-t} f \, \Delta^* \left(|D\log \H_{T-t} f|_w^2\right)  \, \H_t \varphi \, \d \mm,\label{eq:dtPhi1}
\end{eqnarray}
where in the last  equality we  used \eqref{eq:dtlog1} and integrated by parts the Laplacian in the forth row.

On the other hand, by the chain rule on $\Gamma^*_2$ we have 
\begin{eqnarray}
2 \Gamma_2^*(\log \H_{T-t} f)&=& \Delta^*\left(|D\log \H_{T-t} f|_w^2\right) -\frac{2}{\H_{T-t} f} D \log \H_{T-t} f \cdot D\left(\Delta  \H_{T-t} f \right) \mm \nonumber \\
                                           &&  +2 \frac{\Delta \H_{T-t} f}{\H_{T-t} f} \, |D \log \H_{T-t} f|_w^2 \mm +2 D \log \H_{T-t} f \cdot  D \left(|D \log \H_{T-t} f|_w^2\right) \mm \nonumber
\end{eqnarray}
Combining \eqref{eq:dtPhi1} and the last equation gives the thesis.
\end{proof}

The following proposition, which is based on  Lemma \ref{lemma1} above, generalizes an analogous result which, in the Riemannian case, was established in \cite{BG2011}. It will prove crucial for obtaining the Li-Yau type inequalities.

\begin{proposition}\label{prop2}
Let $(X,\sfd,\mm)$ be a $\RCD^*(K,N)$-space with $\mm(X)=1$, $f\in L^1(X,\mm)$ with $f \geq \delta>0$ $\mm$-a.e.,  and $\Phi$ defined as in \eqref{eq:defPhi(t)}. Let $a(\cdot) \in C^1([0,T],\R^+)$ be nonnegative function, and let $\gamma\in C([0,T],\R)$ be another real function.
Then, for every $\varphi \in L^1(X,\mm)$ with $\varphi\geq 0$ $\mm$-a.e., the function 
$$[0,T]\ni t \; \mapsto \; \int_X \Phi(t) a(t) \varphi \, \d \mm \in \R$$
 is absolutely continuous and for a.e.  $t \in [0,T]$ one has  
\begin{equation}\label{eq:prop2}
\frac{d}{dt} \int_X \Phi(t) a(t) \varphi \, \d \mm \geq \int_X\left[ \left(a'(t)-\frac{4 a(t) \gamma(t)}{N}+2 K a(t) \right) \Phi(t)+ \frac{4 a(t) \gamma(t)}{N} \Delta \H_T f - \frac{2 a(t) \gamma^2(t)}{N} \H_T f \right] \varphi \, \d \mm. 
\end{equation} 
\end{proposition}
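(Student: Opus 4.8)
The plan is to start from Lemma \ref{lemma1}, which already establishes the absolute continuity of $t\mapsto \int_X \Phi(t)\,\varphi\,\d\mm$ and gives the identity
$$\frac{d}{dt}\int_X \Phi(t)\,\varphi\,\d\mm = 2\int_X \H_{T-t}f\,\H_t\varphi\,\d\Gamma_2^*(\log\H_{T-t}f).$$
Absolute continuity of $t\mapsto\int_X\Phi(t)\,a(t)\,\varphi\,\d\mm$ then follows since $a\in C^1([0,T])$ and $\Phi(t)$ is bounded in $L^1$ uniformly (using $\H_{T-t}f\geq\delta$ and the Lipschitz bound on $\H_{T-t}f$), so the product rule applies: for a.e. $t$,
$$\frac{d}{dt}\int_X\Phi(t)a(t)\varphi\,\d\mm = a'(t)\int_X\Phi(t)\varphi\,\d\mm + 2a(t)\int_X \H_{T-t}f\,\H_t\varphi\,\d\Gamma_2^*(\log\H_{T-t}f).$$
The crux of the proof is therefore to bound the measure $\Gamma_2^*(\log\H_{T-t}f)$ from below. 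Here I would invoke the self-improved Bochner inequality (the theorem on self-improvement of $\BE(K,N)$ in Subsection \ref{Subsec:CB}) applied to $g:=\log\H_{T-t}f$, which is in $L^\infty\cap\LIP\cap D(\Delta)$ with $\Delta g\in W^{1,2}$ since $\H_{T-t}f\geq\delta$: this yields, as measures,
$$\Gamma_2^*(g)\geq K|Dg|_w^2\,\mm + \frac1N(\Delta g)^2\,\mm.$$
Now $\Delta g = \Delta\log\H_{T-t}f = \frac{\Delta\H_{T-t}f}{\H_{T-t}f} - |D\log\H_{T-t}f|_w^2$ by the chain rule for the Laplacian, and $|Dg|_w^2 = |D\log\H_{T-t}f|_w^2$. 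Writing $u:=\H_{T-t}f$ and $v:=|D\log u|_w^2$, we get the pointwise ($\mm$-a.e.) inequality for the absolutely continuous part,
$$\H_{T-t}f\cdot\Gamma_2^*(\log\H_{T-t}f)\geq \left(Kv + \frac1N\Big(\tfrac{\Delta u}{u}-v\Big)^2\right)u\,\mm \geq \left(Kv + \frac1N\Big(\tfrac{\Delta u}{u}-v\Big)^2\right)u\,\mm,$$
and since $\Gamma_2^*$ may also carry a (non-negative, by the inequality) singular part, and $\H_t\varphi\geq0$ by the minimum principle, we may discard it and keep only the absolutely continuous bound after multiplying by $\H_t\varphi\geq 0$ and integrating. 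Note $uv = u|D\log u|_w^2 = \Phi(t)/\H_t(\cdot)$... more precisely $\H_t(uv)=\Phi(t)$, so after applying $\H_t$ and using self-adjointness of $\H_t$ to move it onto $\varphi$ (exactly as in the proof of Lemma \ref{lemma1}), the term $\int K\,u v\,\H_t\varphi\,\d\mm$ becomes $K\int\Phi(t)\varphi\,\d\mm$.

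The remaining and genuinely algebraic step — which I expect to be where the stated coefficients $a'(t)-\frac{4a\gamma}{N}+2Ka$, $\frac{4a\gamma}{N}$, $-\frac{2a\gamma^2}{N}$ come from — is to handle the quadratic term $\frac1N(\frac{\Delta u}{u}-v)^2 u$. The trick (this is the Bakry–Ledoux / Baudoin–Garofalo device) is \emph{not} to expand the square but to use, for any real parameter $\gamma(t)$, the elementary inequality $w^2 \geq 2\gamma w - \gamma^2$ with $w:=\frac{\Delta u}{u}-v$, i.e.
$$\frac1N\Big(\tfrac{\Delta u}{u}-v\Big)^2 \geq \frac{2\gamma(t)}{N}\Big(\tfrac{\Delta u}{u}-v\Big) - \frac{\gamma^2(t)}{N}.$$
Multiplying by $u=\H_{T-t}f\geq 0$ gives $\frac1N(\frac{\Delta u}{u}-v)^2 u \geq \frac{2\gamma}{N}(\Delta u - uv) - \frac{\gamma^2}{N}u$. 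Now apply $\H_t$, use $\H_t\Delta u = \H_t\Delta\H_{T-t}f = \Delta\H_T f$ (commutativity of $\H_t$ with $\Delta$ and the semigroup property), $\H_t(uv)=\Phi(t)$, and $\H_t u = \H_T f$. Assembling: the right-hand side of the product-rule expression becomes
$$a'(t)\int\Phi(t)\varphi + 2a(t)\int\left(Kuv + \tfrac{2\gamma}{N}(\Delta u - uv) - \tfrac{\gamma^2}{N}u\right)\H_t\varphi\,\d\mm,$$
and translating through $\H_t$ onto $\varphi$: $2a K\int\Phi\varphi - \frac{4a\gamma}{N}\int\Phi\varphi + \frac{4a\gamma}{N}\int(\Delta\H_T f)\varphi - \frac{2a\gamma^2}{N}\int(\H_T f)\varphi + a'\int\Phi\varphi$, which is exactly \eqref{eq:prop2}. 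The main obstacle is purely bookkeeping: making sure the self-adjointness manipulations are justified (all the relevant functions are bounded, $\varphi\in L^1$, $\H_t\varphi\in L^\infty$ for $t>0$ with the needed regularity, $\Delta\H_T f\in W^{1,2}\subset L^2$), that the singular part of $\Gamma_2^*$ is genuinely non-negative so it can be dropped against $\H_t\varphi\geq0$, and that the chain-rule identity for $\Delta\log u$ used here is the one already appearing (implicitly, via $\Gamma_2^*$) in the proof of Lemma \ref{lemma1}. No new analytic input beyond the self-improved $\BE(K,N)$ inequality and Lemma \ref{lemma1} is needed.
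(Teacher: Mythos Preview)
Your proposal is correct and follows essentially the same route as the paper: product rule plus Lemma~\ref{lemma1}, then the self-improved $\BE(K,N)$ inequality applied to $\log\H_{T-t}f$, the elementary bound $w^2\geq 2\gamma w-\gamma^2$ with $w=\Delta\log\H_{T-t}f$, the chain rule for $\Delta\log u$, and finally self-adjointness together with $\H_t(\Delta\H_{T-t}f)=\Delta\H_T f$ to arrive at the stated coefficients. Your explicit remark about discarding the (nonnegative) singular part of $\Gamma_2^*$ against $\H_t\varphi\geq 0$ is a point the paper leaves implicit, but otherwise the two arguments coincide.
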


\begin{proof}
Since by assumption $a(\cdot)$ is $C^1$, the regularity of the map $t \mapsto \int_X \Phi(t) a(t) \varphi \, \d \mm$ follows from Lemma  \ref{lemma1}. 
By applying Lemma \ref{lemma1} and the improved $\BE(K,N)$ condition \eqref{eq:ImprBE(K,N)} we obtain
\begin{eqnarray}
\frac{d}{dt} \int_X \Phi(t) a(t) \varphi \, \d 	\mm &=& \int_X \Phi(t) a'(t) \varphi \, \d \mm + 2 \int_X \H_{T-t} f \, \H_t(a(t)\varphi) \, \d\Gamma^*_2(\log \H_{T-t}f) \nonumber \\
                                                    &\geq &  \int_X \Phi(t) a'(t) \varphi \, \d \mm + 2 K \int_X \H_{T-t} f \, \H_t(a(t)\varphi) \, |D  \log \H_{T-t}f|_w^2 \, \d \mm \nonumber \\
												                            && + \frac{2}{N}	\int_X \H_{T-t} f \, \H_t(a(t)\varphi) \, \left(\Delta  \log \H_{T-t}f \right)^2 \, \d \mm.\label{eq:dtaPhi1}													
\end{eqnarray} 
Now observe that 
\begin{equation} \label{eq:dtaPhi2}
(\Delta \log \H_{T-t} f)^2 \geq 2 \gamma(t) \, \Delta (\log \H_{T-t} f) - \gamma(t)^2,
\end{equation}
and by chain rule
\begin{equation} \label{eq:dtaPhi3}
\Delta \log \H_{T-t} f = \frac{\Delta \H_{T-t} f}{\H_{T-t} f} - |D \log \H_{T-t} f|_w^2.
\end{equation}
The conclusion follows combining \eqref{eq:dtaPhi1}, \eqref{eq:dtaPhi2} and \eqref{eq:dtaPhi3}, keeping in mind that $\H_t(\Delta \H_{T-t} f)=\H_T \Delta f$  and the selfadjointness of the heat flow.
\end{proof}

\section{Proof of the main results}\label{sec:proof}
In order to obtain the desired Li-Yau type inequalities we make some appropriate choices in Proposition \ref{prop2}.
Let us take a function $a(\cdot)$ as in Proposition \ref{prop2} such that  $a(0)=1$ and $a(T)=0$, and $\gamma$ such that
\begin{equation}\label{eq:defgamma}
a'(t)-\frac{4 a(t) \gamma(t)}{N}+2 K a(t) \equiv 0
\end{equation}
i.e. $\gamma(t):=\frac{N}{4} \left(\frac{a'(t)}{a(t)}+2 K\right)$. Then, the following proposition holds.
\begin{proposition}\label{prop:PreLiYau}
Let $(X,\sfd,\mm)$ be a $\RCD^*(K,N)$-space with $\mm(X)=1$, and  $f\in L^1(X,\mm)$ with $f \geq \delta>0$ $\mm$-a.e. Fix $T>0$, and  let $a(\cdot) \in C^1([0,T], \R^+)$ with $a(0)=1$ and $a(T)=0$. Then, the following inequality holds $\mm$-a.e. :
\begin{equation}\label{eq:PreLIYAU}
|D \log \H_T f|_w^2 \leq \left(1- 2 K \int_0^T a(t) \, \d t \right) \frac{\Delta \H_T f}{ \H_T f}+ \frac{N}{2} \left(\int_0^T \frac{a'(t)^2}{4 a(t)}\, \d t- K+ K^2 \int_0^T a(t) \, \d t\right)   \; .
\end{equation}
 \end{proposition}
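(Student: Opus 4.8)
The plan is to apply Proposition \ref{prop2} with the weight $\gamma$ prescribed by \eqref{eq:defgamma}, that is $\gamma(t):=\frac{N}{4}\bigl(\frac{a'(t)}{a(t)}+2K\bigr)$, which is exactly the choice annihilating the $\Phi(t)$-coefficient on the right-hand side of \eqref{eq:prop2}. One may assume $\int_0^T\frac{a'(t)^2}{4a(t)}\,\d t<\infty$, since otherwise the right-hand side of \eqref{eq:PreLIYAU} is $+\infty$ and, its left-hand side being finite (as $\H_T f$ is Lipschitz and $\geq\delta$), there is nothing to prove. With this $\gamma$, a direct computation gives $\frac{4a(t)\gamma(t)}{N}=a'(t)+2Ka(t)$ and $\frac{2a(t)\gamma^2(t)}{N}=\frac{N a'(t)^2}{8a(t)}+\frac{NK a'(t)}{2}+\frac{NK^2 a(t)}{2}$, so that the right-hand side of \eqref{eq:prop2} becomes integrable in $t$ on $[0,T]$.

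The core step is then to integrate the differential inequality \eqref{eq:prop2} over $t\in[0,T]$. On the left, absolute continuity (Lemma \ref{lemma1}, Proposition \ref{prop2}) makes this equal to $g(T)-g(0)$ with $g(t):=\int_X\Phi(t)a(t)\varphi\,\d\mm=a(t)\int_X\Phi(t)\varphi\,\d\mm$; since $a(T)=0$ and $t\mapsto\int_X\Phi(t)\varphi\,\d\mm$ is bounded on $[0,T]$ (being continuous there), $g(T)=0$, while $g(0)=\int_X\Phi(0)\varphi\,\d\mm=\int_X\frac{|D\H_T f|_w^2}{\H_T f}\,\varphi\,\d\mm$ because $\Phi(0)=(\H_T f)\,|D\log\H_T f|_w^2=\frac{|D\H_T f|_w^2}{\H_T f}$ by the chain rule \eqref{eq:chainrule} (legitimate since $\H_T f\geq\delta>0$). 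On the right, using $\int_0^T a'(t)\,\d t=a(T)-a(0)=-1$, the $\Delta\H_T f$-term integrates to $\bigl(-1+2K\int_0^T a(t)\,\d t\bigr)\Delta\H_T f$ and the $\H_T f$-term to $-\frac{N}{2}\bigl(\int_0^T\frac{a'(t)^2}{4a(t)}\,\d t-K+K^2\int_0^T a(t)\,\d t\bigr)\H_T f$. Collecting these and multiplying by $-1$, I obtain that $\int_X\frac{|D\H_T f|_w^2}{\H_T f}\,\varphi\,\d\mm$ is bounded above by the integral against $\varphi$ of $\bigl(1-2K\int_0^T a(t)\,\d t\bigr)\Delta\H_T f+\frac{N}{2}\bigl(\int_0^T\frac{a'(t)^2}{4a(t)}\,\d t-K+K^2\int_0^T a(t)\,\d t\bigr)\H_T f$, for every nonnegative $\varphi\in L^1(X,\mm)$. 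Since $\varphi$ is arbitrary, the corresponding inequality holds $\mm$-a.e.; dividing by $\H_T f\geq\delta>0$ and using $|D\log\H_T f|_w^2=\frac{|D\H_T f|_w^2}{(\H_T f)^2}$ yields precisely \eqref{eq:PreLIYAU}.

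The main obstacle I foresee is the endpoint $t=T$: because $a(T)=0$, the weight $\gamma$ blows up near $T$, whereas Proposition \ref{prop2} is phrased for $\gamma\in C([0,T],\R)$. I would handle this either by noting that in the proof of Proposition \ref{prop2} the continuity of $\gamma$ is never used — it enters only through the elementary bound $\bigl(\Delta\log\H_{T-t}f-\gamma(t)\bigr)^2\geq0$, which holds for any measurable $\gamma$ — or, more cautiously, by carrying out the integration on $[0,T-\eps]$ and letting $\eps\downarrow0$, using the boundedness of $t\mapsto\int_X\Phi(t)\varphi\,\d\mm$ to send the boundary term $a(T-\eps)\int_X\Phi(T-\eps)\varphi\,\d\mm$ to $0$, and the finiteness of $\int_0^T\frac{a'(t)^2}{a(t)}\,\d t$ to pass to the limit on the right. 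All the remaining manipulations are elementary.
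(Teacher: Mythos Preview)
Your proposal is correct and follows essentially the same route as the paper: choose $\gamma$ as in \eqref{eq:defgamma} to kill the $\Phi$-coefficient in \eqref{eq:prop2}, integrate in $t$ over $[0,T]$, use the boundary values $a(0)=1$, $a(T)=0$, and pass from the integral inequality against arbitrary nonnegative $\varphi$ to the $\mm$-a.e.\ statement. You are in fact more careful than the paper in flagging and resolving the endpoint issue at $t=T$ (where $\gamma$ need not extend continuously); the paper applies Proposition~\ref{prop2} tacitly, whereas your $\eps$-truncation argument (or the observation that only the pointwise inequality $(\Delta\log\H_{T-t}f-\gamma(t))^2\geq 0$ is used) cleanly justifies the step.
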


\begin{proof}
With $\gamma$ chosen as in  \eqref{eq:defgamma}, for every $\varphi \in L^1(X,\mm)$ with $\varphi \geq 0$ $\mm$-a.e., integrate \eqref{eq:prop2} in $t$ from 0 to $T$ in order to obtain the following inequality
\begin{eqnarray}
-\int_X \H_T f |D \log \H_T f|_w^2 \,\varphi \, \d \mm &\geq & \int_0^T \left( \int_X  \left(a'(t)+2 K a(t)\right) \Delta \H_T f \, \varphi \, \d \mm \right) \, \d t  \nonumber \\
&& - \frac{N}{2} \int_0^T \left( \int_X \left(\frac{a'(t)^2}{4 a(t)}+K a'(t)+K^2a(t) \right) \H_T f \, \varphi \, \d \mm \right) \, \d t \quad . \nonumber 
\end{eqnarray}
Using Fubini's Theorem in the right-hand side, and recalling the assumption on $a(\cdot)$,  we obtain
\begin{eqnarray}
-\int_X \H_T f |D \log \H_T f|_w^2 \,\varphi \, \d \mm &\geq& \int_X \bigg[ \left(1-2 K \int_0^T a(t)\, \d t \right)  \Delta \H_T f \nonumber \\
  &&\qquad  + \frac{N}{2} \left( \int_0^T \frac{a'(t)^2}{4 a(t)} \d t - K +K^2\int_0^T a(t) \, \d t \right)\H_T f \bigg]\varphi \, \d \mm \quad.\nonumber
\end{eqnarray}
Since the last inequality holds for every $\varphi \in L^1(X,\mm)$ with $\varphi \geq 0$ $\mm$-a.e., and since both the integrands are $L^\infty(X,\mm)$ functions, the conclusion follows.
\end{proof}

For what follows it is useful to perform a change of variable in \eqref{eq:PreLIYAU}. Namely, calling $V(t):=\sqrt{a(t)}$, with a straightforward computation we find 
\begin{equation}\label{eq:PreLIYAUV}
|D \log \H_T f|_w + \left(2 K \int_0^T V^2(t) \, \d t -1 \right) \frac{\Delta \H_T f}{ \H_T f} \leq  \frac{N}{2} \left(\int_0^T V'(t)^2\, \d t- K+ K^2 \int_0^T V(t)^2 \, \d t\right)   \;. 
\end{equation}

With a particular choice  of the function $V(\cdot)$ in \eqref{eq:PreLIYAUV} (see the proof in Subsection \ref{SS:LY} below),  the celebrated Li-Yau inequality stated in Theorem \ref{thm:Li-Yau} will easily follow.  

\subsection{Proof of the Li-Yau inequality, Theorem \ref{thm:Li-Yau}}\label{SS:LY}
Let $\varepsilon>0$, set $f_\varepsilon:=f+\varepsilon$ and notice that $f_\varepsilon\geq \varepsilon >0$ $\mm$-a.e. so that we can  apply \eqref{eq:PreLIYAUV}  to $f_\varepsilon$ and $K=0$, obtaining
\begin{equation} \label{eq:LiYauproof1}
|D \log \H_T f_\varepsilon|_w^2- \frac{\Delta \H_T f_\varepsilon}{\H_T f_\varepsilon} \leq \frac{N}{2} \int_0^T V'(t)^2 \, \d t  \quad \mm\text{-a.e.} 
\end{equation} 
Choosing $V(t):=1-\frac{t}{T}$ (notice that this choice minimizes the integral in the right hand side among all the $C^1([0,T], \R^+)$ functions null at $T$ and equal to $1$ at $0$), we obtain
\begin{equation} \label{eq:LiYauproof2}
|D \log \H_T f_\varepsilon|_w^2- \frac{\Delta \H_T f_\varepsilon}{\H_T f_\varepsilon} \leq \frac{N}{2T}  \quad \mm\text{-a.e.} \quad  .
\end{equation} 
Recalling that $\H_t \varepsilon=\varepsilon$, from the linearity of the weak differential and of the Laplacian we have
$$|D \log \H_T f_\varepsilon|_w= \frac{|D \H_T f|_w}{\H_t f+\varepsilon}\quad \text{and} \quad \Delta \log \H_T f_\varepsilon= \frac{\Delta \H_T f}{\H_t f+\varepsilon}, $$
which, substituted into \eqref{eq:LiYauproof2}, gives
\begin{equation} \label{eq:LiYauproof3}
|D  \H_T f|_w^2- (\Delta \H_T f) (\H_T f +\varepsilon) \leq \frac{N}{2T}  (\H_T f +\varepsilon)^2  \quad \mm\text{-a.e.}  .
\end{equation} 
Letting $\varepsilon \downarrow 0$ in \eqref{eq:LiYauproof3} gives \eqref{eq:LiYau1}. In order to obtain the second formulation \eqref{eq:LiYau2}, observe that if $f\mm \in\probt X$, from the discussion in the beginning of Section \ref{sec:2lemmas} we know that   $\log \H_T f \in {\cal S}^2_{(\H_T f)\mm}$ the weighted Sobolev space, and
\begin{equation}\label{eq:DlogHt}
|D \log(\H_T f)|_{w,(\H_T f)\mm} = \frac{|D (\H_T f)|_w}{\H_T f} \quad (\H_T f) \mm \text{-a.e.} 
\end{equation}
The estimate \eqref{eq:LiYau2} thus follows  combining  \eqref{eq:LiYau1} and \eqref{eq:DlogHt}.
\hfill$\Box$.

\subsection{Proof of Theorems \ref{thm:Bakry-Qian} and \ref{thm:BG}}

In this subsection we provide the proofs of Theorems \ref{thm:Bakry-Qian} and \ref{thm:BG}.
 
\begin{proof}[Proof of Theorem \ref{thm:Bakry-Qian}]
As in the proof of Theorem \ref{thm:Li-Yau}, for $\varepsilon>0$ we set $f_\varepsilon:=f+\varepsilon$ and we apply \eqref{eq:PreLIYAUV}  to $f_\varepsilon$ with $V(t):=1-\frac{t}{T}$. A straightforward computation gives for any $t>0$
\begin{equation}\nonumber 
\left(\frac{2Kt}{3}-1\right) \frac{\Delta \H_t f_\varepsilon}{\H_t f_\varepsilon} \leq \frac{N}{2} \left( \frac{1}{t}+\frac{K^2 t} {3}-K \right) \quad \mm\text{-a.e.} 
\end{equation} 
Since for $t\geq \frac{2}{K}$ the term $\frac{2Kt}{3}-1$ is strictly positive we obtain
\begin{equation}\nonumber 
\Delta \H_t f_\varepsilon \leq  \frac{\frac{N}{2} \left( \frac{1}{t}+\frac{K^2 t} {3}-K \right) } {\frac{2Kt}{3}-1} \H_t f_\varepsilon  \quad \mm\text{-a.e.}  
\end{equation} 
An easy computation shows that the fraction in the right hand side is bounded above by $\frac{NK}{4}$ if and only if $t \geq \frac{2}{K}$. Recalling that $\H_t f_{\varepsilon} = \H_t f+ \varepsilon$ and $\Delta (\H_t f_\varepsilon)= \Delta (\H_t f)$, by letting  $\varepsilon \downarrow 0$ we reach the desired conclusion. 

\end{proof}


\begin{proof}[Proof of Theorem \ref{thm:BG}]
Applying \eqref{eq:PreLIYAUV} to $f_\varepsilon:=f+\varepsilon$, for a fixed $\varepsilon>0$, and 
$$V(t):=\frac{e^{-\frac{Kt}{3}} \left( e^{-\frac{2Kt}{3}}- e^{-\frac{2KT}{3}} \right)}{1-e^{-\frac{2KT}{3}}}, $$
the proof can be performed analogously to the one of Theorem \ref{thm:Li-Yau}.

\end{proof}


\subsection{Proof of Theorem \ref{thm:Harnack} }

In this subsection we  will use ideas from optimal trasport (which seem to have been used for the first time in this context), in combination with Theorem \ref{thm:BG} above, to prove Theorem \ref{thm:Harnack}. As in the previous proofs let $\fe:=f+\varepsilon$ for some $\varepsilon>0$. Applying \eqref{eq:BG2} above to $\fe$, we find 
\begin{equation}\label{eq:pfHar1}
-\frac{d}{dt} \log(\H_t \fe)\leq -e^{\frac{2 K t}{3}} |D \log \H_t \fe|_w^2  + \frac{N K}{3} \frac{e^{-\frac{2 K t}{3}}}{1-e^{-\frac{2Kt}{3}}} \quad \mm\text{-a.e.} 
\end{equation}
Recall that in our notation $\supp(\mm)=X$.
Fix $x,y \in X$ and  $r>0$ (in the end we will let $r\downarrow 0$), and set  
\[
z^r_0=\mm(B_r(y))^{-1},\ \ \ \ \ z^r_1=\mm(B_r(x))^{-1}.
\]  
Define $\mu^r_0,\mu^r_1 \in \probt X$ as 
$$\mu^r_0:=z^r_0  \, \chi_{B_r(y)} \quad \text{ and } \quad  \mu^r_0:=z^r_1 \, \chi_{B_r(x)},$$
where $\chi_E$ is the characteristic function of the subset $E$. 
\\Let $\ppi^r\in \gopt(\mu^r_0, \mu^r_q)$ be given by Theorem \ref{thm:ImprGeod} and recall that it is a test plan in the sense of Definition \ref{def:TestPlan}. 
For any fixed $0<s<t$  define $\alpha: AC^2([0,1],X)\times [0,1]\to X \times [s,t]$ as 
\begin{equation}\label{def:alpha}
\alpha(\gamma, \tau):= (\gamma(\tau), t+\tau(s-t)).
\end{equation}
Let also $\ue(z,\tau):=\H_\tau \fe (z)$ be the spatial-continuous (i.e. in the variable $z$; actually it is even Lipschitz in $z$)   representative given by \eqref{eq:ContRepHt}, and set $\phi_\varepsilon(\gamma, \tau):= \log \ue(\alpha(\gamma, \tau))$. 
Using the chain rule and recalling Remark \ref{rem:SobCurve}, we have
\begin{eqnarray}
\int \log\left(\frac{\ue(\gamma_1,s)}{\ue(\gamma_0,t)}\right) \, \d \ppi^r(\gamma) &=&  \int \left( \int_0^1 \phi_\varepsilon'(\gamma,\tau) \d \tau \right) \d\ppi^r(\gamma) \nonumber \\
                                           &\leq& \int\left( \int_0^1 |D\log(\ue)|_w(\alpha(\gamma,\tau)) \, |\dot{\gamma}| \d \tau \right) \d \ppi^r(\gamma)  \nonumber \\
																					&&-(t-s) \int\left( \int_0^1 \left(\frac{\partial}{\partial t} \log(\ue)  \right) (\alpha(\gamma,\tau)) \, \d \tau \right) \d\ppi^r(\gamma) . \label{eq:pfHarn1b}
\end{eqnarray}
Since $\ppi^r$ is a test plan, \eqref{eq:pfHar1} implies that for $\ppi^r$-a.e. $\gamma$,  and every $\tau\in[0,1]$, one has
\begin{equation}\label{eq:pfHarn1c}
-\left( \frac{\partial}{\partial t} \log(\ue)\right) (\alpha(\gamma,\tau)) \leq -e^{\frac{2 K}{3} (t+\tau(s-t))} |D \log \ue|_w^2 (\alpha(\gamma,\tau)) + \frac{N K}{3} \frac{e^{-\frac{2 K}{3}(t+\tau(s-t)) }}{1-e^{-\frac{2K}{3}(t+\tau(s-t))}}. 
\end{equation}
 
Estimating the first addendum of \eqref{eq:pfHarn1b} with Cauchy-Schwarz inequality and  the second with  \eqref{eq:pfHarn1c} , for any $\eta>0$ to be fixed later, we find
\begin{eqnarray}
\int \log\left(\frac{\ue(\gamma_1,s)}{\ue(\gamma_0,t)}\right) \, \d \ppi^r(\gamma) &\leq& \frac{\eta}{2} \int\left( \int_0^1 |D\log \ue|_w^2(\alpha(\gamma,\tau)) \, \d \tau \right) \d\ppi^r(\gamma)   + \frac{1}{2\eta} \int |\dot{\gamma}|^2 \, \d \ppi^r(\gamma) \nonumber \\
                                             && -(t-s) \int \left( \int_0^1 e^{\frac{2 K}{3} (t+\tau(s-t))} |D \log  \ue|_w^2 (\alpha(\gamma, \tau)) \, \d \tau \right) \d \ppi^r(\gamma) \nonumber \\
																						&& +(t-s) \frac{NK}{3}  \int_0^1 \frac{e^{-\frac{2 K}{3} (t+\tau(s-t))}}{1-e^{-\frac{2 K}{3} (t+\tau(s-t))}}  \d \tau. \label{eq:pfHarn2}
\end{eqnarray}

\

CASE 1: $K\geq 0$. A direct computation shows that
\begin{equation}\label{eq:lasInt}
(t-s)\frac{NK}{3} \int_0^1 \frac{e^{-\frac{2 K}{3} (t+\tau(s-t))}}{1-e^{-\frac{2 K}{3} (t+\tau(s-t))}} \d \tau = \frac{N}{2} \log\left(\frac{1-e^{\frac{2 K}{3} t}}{1-e^{\frac{2 K}{3} s}} \right).
\end{equation}
Moreover, observing that the function $\tau \mapsto  e^{\frac{2 K}{3} (t+\tau(s-t))}$ is non increasing, we can estimate
\begin{equation}\label{eq:2line}
\int \left( \int_0^1 e^{\frac{2 K}{3} (t+\tau(s-t))} |D \log  \ue|_w^2 (\alpha(\gamma,\tau))\d \tau \right) \d \ppi^r(\gamma)\geq e^{\frac{2 Ks}{3}}  \int\left( \int_0^1|D \log  \ue|_w^2 (\alpha(\gamma,\tau)) \d \tau \right) \d \ppi^r(\gamma).
\end{equation}
Therefore, choosing $\eta:=2(t-s)e^{\frac{2 Ks}{3}}$, and substituting \eqref{eq:lasInt} and \eqref{eq:2line} into \eqref{eq:pfHarn2}, we obtain
\begin{equation}\label{eq:pfHarn3}
\int \log\left(\frac{\ue(\gamma_1,s)}{\ue(\gamma_0,t)}\right) \, \d \ppi^r(\gamma) \leq  \frac{1}{4 (t-s)e^{\frac{2 Ks}{3}}} \int |\dot{\gamma}|^2 \, \d \ppi^r(\gamma)+  \frac{N}{2} \log\left(\frac{1-e^{\frac{2 K}{3} t}}{1-e^{\frac{2 K}{3} s}} \right).
\end{equation}
Since by construction (for more details see also the last paragraph of Subsection \ref{Subsec:CB}) $\ppi^r$ is a probabililty measure  concentrated along (constant speed) geodesics connecting points of $B_r(y)$ to points of $B_r(x)$, then for $\ppi^r$-a.e. $\gamma$ we have $\gamma_0\in B_r(y)$ and $\gamma_1\in B_r(x)$; recalling that $\ue$ is continuous (actually it is even Lipschitz) in the spatial variable $z$, letting $r\downarrow 0^+$ we find
$$
\lim_{r\downarrow 0 } \int \log\left(\frac{\ue(\gamma_1,s)}{\ue(\gamma_0,t)}\right) \, \d \ppi^r(\gamma) = \log\left(\frac{\ue(x,s)}{\ue(y,t)}\right);
$$
and 
$$
\lim_{r\downarrow 0 }\int |\dot{\gamma}|^2 \, \d \ppi^r(\gamma)= \lim_{r\downarrow 0 }\int \sfd^2(\gamma_0,\gamma_1) \, \d \ppi^r(\gamma)= \sfd^2(y,x). 
$$
It follows that
\begin{equation}\nonumber
\log\left(\frac{\ue(x,s)}{\ue(y,t)}\right)  \leq  \frac{\sfd^2(x,y)}{4 (t-s)e^{\frac{2 Ks}{3}}} +  \frac{N}{2} \log\left(\frac{1-e^{\frac{2 K}{3} t}}{1-e^{\frac{2 K}{3} s}} \right),
\end{equation}
which is the sought for Harnack inequality for $\fe$. Letting $\varepsilon\downarrow 0$ we obtain the desired conclusion.

\

CASE 2: $K<0$. In this case  the function $\tau \mapsto  e^{\frac{2 K}{3} (t+\tau(s-t))}$ is non decreasing, so we can estimate
\begin{equation}\label{eq:2line2}
 \int\left(\int_0^1 e^{\frac{2 K}{3} (t+\tau(s-t))} |D \log  \ue|_w^2 (\alpha(\gamma,\tau))\d \tau \right) \d\ppi^r(\gamma) \geq e^{\frac{2 Kt}{3}} \int\left(\int_0^1|D \log  \ue|^2_w (\alpha(\gamma,\tau)) \d \tau \right) \d \ppi^r(\gamma).
\end{equation}
Therefore, choosing $\eta:=2(t-s)e^{\frac{2 Kt}{3}}$, substituting \eqref{eq:lasInt} and \eqref{eq:2line2} into \eqref{eq:pfHarn2}, and finally letting $r\downarrow 0$ as above we obtain
\begin{equation}\label{eq:HarnackK<0}
\log\left(\frac{\ue(x,s)}{\ue(y,t)} \right) \leq  \frac{\sfd^2(x,y)}{4 (t-s) \; e^{\frac{2 Kt}{3}}} +  \frac{N}{2} \log\left(\frac{1-e^{\frac{2 K}{3} t}}{1-e^{\frac{2 K}{3} s}} \right).
\end{equation}
Letting $\varepsilon\downarrow 0$ we reach the desired conclusion.
\hfill$\Box$

\def\cprime{$'$}

\end{document}